      \theoremstyle{plain}
      \newtheorem{theorem}{Theorem}[section]
      \newtheorem{lemma}[theorem]{Lemma}
      \newtheorem{corollary}[theorem]{Corollary}
      \newtheorem{remark}[theorem]{Remark}
      \newtheorem{definition}[theorem]{Definition}      
      \newtheorem{assumptions}[theorem]{Assumptions}     
\numberwithin{equation}{section}
      \def\@setcopyright{}
      \def\serieslogo@{}
\def\M{X}
\def\c{\EuScript{C}} 
\def\R{\mathbb R}
\def\Z{\mathbb Z}
\def\N{\mathbb N}
\def\T{\mathbb T}
\def\G{\mathcal G}
\def\S{\mathcal {B}}
\def\po{{\mathcal{Q}}}
\def\r{{\mathcal{R}}}
\def\s{{\mathcal{S}}}
\def\ss{{\mathcal{SS}}}
\def\c{{\mathcal{C}}}
\def\n{{\mathcal{N}}}
\newcommand{\la}{\lambda}
\newcommand{\La}{\Lambda}
\def\Id{\text{Id}}
\def\e{\varepsilon}
\def\a{\alpha}
\def\Ci{C^\infty}
\def\Cr{C^{N,\alpha}}
\def\w{\mathcal{W}}
\def\f{\bar f}
\def\E{{\mathcal{E}}}
\def\V{{\mathcal{V}}}
\def\fe{{\mathcal {F}}}
\def\f{{\mathcal {F}}}
\def\fd{{F}}
\def\pe{{\mathcal {P}}}
\def\p{{\mathcal {P}}}
\def\pd{{P}}
\def\tp{ \tilde {\mathcal {P}}}
\def\tpd{ \tilde{P}}
\def\h{{\mathcal {H}}}
\def\hd{H}
\def\g{\mathcal G}
\def\G{\Gamma}
\def\QED{\hfill\hfill{\square}}
\begin{document}

\author{Boris Kalinin$^{\ast}$}

\address{Department of Mathematics, The Pennsylvania State University, University Park, PA 16802, USA.}
\email{kalinin@psu.edu}

\title[Non-stationary normal forms for contracting extensions]
{Non-stationary normal forms for contracting extensions} 
\dedicatory{Dedicated to the  memory of Anatole Katok}

\thanks{{\it Mathematical subject classification:}\,  37D20, 37D30, 37C15, 34C20}
\thanks{{\it Keywords:}\, Normal form, contracting foliation, resonance, sub-resonance, polynomial map}
\thanks{$^{\ast}$ Supported in part by Simons Foundation grant 426243}


\begin{abstract}
We present the theory of non-stationary normal forms for uniformly contracting smooth extensions
with sufficiently narrow Mather spectrum.  We give coherent proofs of existence, (non)uniqueness, 
and a description of the centralizer results. As a corollary, we obtain corresponding results
on normal forms along an invariant contracting foliation. The main improvements over the previous 
results in the narrow spectrum setting include explicit description of non-uniqueness and obtaining
results in any regularity above the precise critical level, which is especially useful for the centralizer. 
In addition to sub-resonance normal form, we also prove corresponding results for resonance normal 
form, which is new in the narrow spectrum setting.
\end{abstract}

\maketitle 


\section{Introduction}

The theory of normal forms for smooth maps goes back to Poincare and Sternberg \cite{St}
and plays an important role in dynamics. It has been extensively studied in the classical 
setting of normal forms at fixed points and invariant manifolds \cite{BK}. The theory of 
non-stationary normal forms was developed more recently in the context of extensions
and invariant foliations. The primary motivations and applications were various rigidity results
for systems and actions exhibiting some form of hyperbolicity.

In the setting of an invariant contracting foliation $\w$ for a diffeomorphism $f$ of a compact manifold 
$\M$, the goal is to obtain a family of diffeomorphisms $\h_x: \w_x \to T_x\w$ such that the maps 
\begin{equation} \label{form}
 \p_x =\h_{fx} \circ f \circ \h_x ^{-1}: \;T_x \w \to T_{fx}\w
\end{equation}
are as simple  as possible. The maps $\p_x$, called the normal form of $f$ on $\w$, will be 
polynomial in our setting. The case of linear $\p_x$ is called non-stationary linearization. 
Some of the theory can be developed in a more general context of a smooth extension $\f$ 
of $f$ to a vector bundle $\E$ over $X$. The foliation setting produces such a smooth extension 
on the tangent bundle to the foliation $\E=T\w$ as follows.  We take $\f _x = f|_{W_x} : \E_x \to \E_{fx}$ 
after local identification of the leaf $\w_x$ with its tangent 
space $\E_x=T_x\w$.  In this setting, the map $\h_x$ is a coordinate change on $\E_x$.

\vskip.2cm

The non-stationary linearization for one-dimensional extensions was obtained by Katok and 
Lewis \cite{KL} and applied to the study of rigidity for $SL(n,\Z)$ actions on $\T^n$. 
For higher-dimensional foliations under the assumption of constant 1/2 pinched bounds
for contraction rates, non-stationary linearization follows from results of Guysinsky and Katok
\cite{GK} or from results of Feres in \cite{F1}. Under a weaker assumption of pointwise 1/2 pinching, 
it was obtained by Sadovskaya \cite{S} and some further properties were established by Kalinin and  
Sadovskaya \cite{KS}. This is a less technical but very important case of non-stationary normal forms 
and we give a brief summary of these results in Section~\ref{NSLinear}. They were used extensively
 in the study of rigidity of Anosov systems  and higher rank actions, see e.g. \cite{S,KS03,KS,Fa,FFH,GKS10,But}.

\vskip.2cm

In higher-dimensional setting without 1/2 pinching, there may be no smooth non-stationary 
linearization, and so a polynomial normal form is sought. Under the narrow band spectrum 
assumption, such forms were developed by Guysinsky and Katok \cite{GK,G} and used by 
Katok and Spatzier to obtain local rigidity of algebraic higher rank Anosov abelian actions \cite{KSp97}. 
A geometric point of view on normal forms was developed by Feres in \cite{F2}.
The narrow band assumption 
ensures that the polynomial maps involved belong to a finite dimensional Lie group of so called sub-resonance
generated polynomials. In \cite{KS15} Kalinin and Sadovskaya obtained stronger results, constructing 
$\h_x$ which depend smoothly on $x$ along the leaves and proving that they define an atlas with 
transition maps in a similar finite dimensional Lie group. 

\vskip.2cm

A parallel theory of  non-stationary normal forms was also developed for non-uniform contractions. 
Basic results were formulated by Kalinin and Katok \cite{KKt00} along with a program of applications
 to measure rigidity for non-uniformly hyperbolic systems and actions. The existence of $\h_x$ for a 
 general contracting $C^\infty$ extension was proved by Li and Lu \cite{LL} in the setting of random 
 dynamical systems. Some results, such as existence of Taylor polynomial or formal series for $\h_x$, 
 can be obtained for extensions more general than contractions, see \cite{AK,A,LL}.
Non-stationary linearization of a $C^{1+\a}$ diffeomorphism along a non-uniformly contracting foliation 
was obtained by Kalinin and Katok \cite{KKt} for one-dimensional leaves and by Katok and Rodriguez 
Hertz \cite{KtR} under pinching assumption on Lyapunov exponents. The former result was used in the 
study of measure rigidity by Kalinin, Katok, and Rodriguez Hertz  \cite{KKt,KKtR}. 
Following \cite{KS15}, the advanced theory of normal forms on non-uniformly contracting foliations,
including the consistency of normal form coordinates along the leaves, were  obtained
independently by Kalinin and Sadovskaya \cite{KS16} and, in differential geometric formulations, 
by Melnick \cite{M}. In addition to measurable properties of non-uniformly hyperbolic systems, this 
theory is useful in global, rather than local, smooth rigidity of uniformly hyperbolic systems, where
the  spectrum may not be narrow \cite{FKSp10}. The main disadvantage of this setting is that 
dependence of $\h_x$ on $x$ is only measurable.

\vskip.2cm

Our main goal is to present the theory of non-stationary normal forms, both in extension and foliation settings, 
assuming that the spectrum is sufficiently narrow. In particular, the results apply to perturbations of algebraic 
and point spectrum systems. We follow the approach developed in  \cite{KS15,KS16} and give a coherent treatment of existence, (non)uniqueness, and centralizer results. The main improvements over the previous results in the narrow band spectrum
setting are the following. Our construction allows us to describe the exact extent of non-uniqueness 
in $\h_x$ and $\p_x$ and hence gives the description of the centralizer. It works in any regularity of 
$\f$ above the precise critical level in H\"older classes. This is especially important for the centralizer
results as they yield an automatic bootstrap of regularity for a commuting system, from critical one
to that of $\h_x$. These improvements proved useful in rigidity results \cite{DWX,GKS19}.
In addition to sub-resonance normal form, we also prove existence, (non)uniqueness, and 
centralizer results for resonance normal form, which has not been done in the  narrow band setting.


\section{Non-stationary linearization} \label{NSLinear}
In this section we summarize the results on non-stationary linearization, that is existence of $\h_x$ 
so that $\p_x$ in \eqref{form} are linear. This important particular case of non-stationary normal
form theory is easier to formulate and provides a point of comparison for the more technical general case.
Also, it is the only result obtained under pointwise assumption on contraction rates. We formulate 
it in foliation setting, where it is most interesting. We state the results for $\Ci$ case, as established 
in \cite[Proposition 4.1, Lemma 4.1]{S} and \cite[Proposition 4.6]{KS}, and then make remarks on lower regularity.

\begin{theorem} [Non-stationary linearization]  \cite{S,KS} \label{Linear} $\;$ \\
Let $f$ be a diffeomorphism of a compact manifold $\M$ and let $\w$ be an $f$-invariant continuous
 foliation of $\M$ with uniformly $\Ci$ leaves. Suppose that $\| Df|_{T\w}\|<1$, and there exist
$C>0$ and $\gamma<1$ such that 
\begin{equation}\label{12pinch}
 \|\,(Df^n|_{T_x\w})^{-1}\,\| \cdot \|\,Df^n|_{T_x\w}\,\|^2   \leq C\gamma^n  \quad \text{ for all }\; x\in \M, \, n\in\mathbb N.
  \end{equation}
  Then for every $x\in \M$ there exists a $C^\infty$ diffeomorphism 
$\h_x: \w_x \to T_x\w$ such that
\begin{itemize}
 \item[(i)] $\h_{fx}\circ f \circ \h_x^{-1}=Df|_{T_x\w} ,$ 
 \vskip.05cm
 \item[(ii)] $\h_x(x)=0$  and $D_x\h_x$ is the identity map, 
 \vskip.05cm  
 \item[(iii)] $\h_x$ depends continuously on $x\in \M$ in $C^\infty$ topology.
  \vskip.05cm
 \item[(iv)]  Such a family $\h_x$ is unique and 
depends smoothly on $x$ along the leaves of $\w$.
 \vskip.05cm
  \item[(v)] The map $\h_y \circ \h_x^{-1}: T_x\w\to T_y\w$
is affine for any $x\in \M$ and $y\in \w_x$. Hence the non-stationary linearization $\h$ defines affine structures on the leaves of $\w$.
\end{itemize}
\end{theorem}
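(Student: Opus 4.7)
The strategy is to realize $\h_x$ as the unique limit of an iterative construction already implicit in the definition of a normal form. After identifying a neighborhood of $x$ in $\w_x$ with $\E_x:=T_x\w$ via a smooth chart, $f$ restricted to leaves becomes a smooth bundle map $\f_x:\E_x\to\E_{fx}$ fixing the origin with $D_0\f_x=A_x:=Df|_{T_x\w}$, and we write $\f_x=A_x+\psi_x$ where $\psi_x$ vanishes to second order at $0$. Setting $A_x^n:=A_{f^{n-1}x}\cdots A_x$, the only candidate consistent with (i)--(ii) is
$$\h_x=\lim_{n\to\infty} H_n(x),\qquad H_n(x):=(A_x^n)^{-1}\circ\f_x^n.$$
Proving convergence of this sequence in $C^\infty$ simultaneously yields (i)--(iii), and the same quantitative estimates will give (iv)--(v).

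The core estimate starts from the telescoping identity
$$H_{n+1}(x)-H_n(x)=(A_x^{n+1})^{-1}\circ\psi_{f^n x}\circ\f_x^n.$$
Using $|\psi_y(w)|\le C|w|^2$ uniformly and $|\f_x^n(v)|\le C\|A_x^n\|\,|v|$ on bounded neighborhoods of $0$, the hypothesis \eqref{12pinch} gives
$$|H_{n+1}(x)(v)-H_n(x)(v)|\le C'\,\|(A_x^{n+1})^{-1}\|\cdot\|A_x^n\|^2\,|v|^2\le C''\gamma^n |v|^2,$$
so $\sum (H_{n+1}-H_n)$ converges uniformly on compact sets. Expanding $D^k(H_{n+1}-H_n)$ by the chain rule, every term is a product of $(A_x^{n+1})^{-1}$, at least two factors of $D\f_x^n\sim A_x^n$, and a derivative of $\psi$; terms containing $\psi$ or $D\psi$ absorb additional factors of $\|A_x^n\|\,|v|$ by their vanishing at $0$, while terms with $D^{\ge 2}\psi$ never accumulate more than two factors of $A_x^n$ in the leading bound. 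Hence every term is dominated by $\|(A_x^n)^{-1}\|\cdot\|A_x^n\|^2$ up to an $|v|$-polynomial, and \eqref{12pinch} forces $C^\infty$ convergence uniformly in $x$, giving (i)--(iii).

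For uniqueness in (iv), if $\tilde\h_x$ is another such family, set $G_x:=\tilde\h_x\circ\h_x^{-1}$; it fixes $0$ with $D_0G_x=\Id$ and satisfies $G_{fx}=A_x G_x A_x^{-1}$. Writing $G_x(v)=v+\zeta_x(v)$ with $\zeta_x(v)=O(|v|^2)$ uniformly, iteration yields $\zeta_x(w)=(A_x^n)^{-1}\zeta_{f^n x}(A_x^n w)$, whence
$$|\zeta_x(w)|\le C\|(A_x^n)^{-1}\|\cdot\|A_x^n\|^2\,|w|^2\le C'\gamma^n|w|^2\to 0,$$
forcing $\zeta_x\equiv 0$ and $\tilde\h_x=\h_x$. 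For the leafwise smoothness in (iv) and the affine structure in (v), let $y\in\w_x$ and consider $K_{x,y}:=\h_y\circ\h_x^{-1}:T_x\w\to T_y\w$. The conjugacies (i) force $K_{f^nx,f^ny}(v)=A_y^n K_{x,y}(A_x^{-n}v)$. Substituting $v=A_x^n v_0$, differentiating twice, and using that $A_x^n v_0\to 0$ and $D^2K$ is uniformly bounded near $0$ (by the $C^\infty$ continuity established above), one obtains
$$\|D^2_{v_0}K_{x,y}\|\le \|(A_y^n)^{-1}\|\cdot\|A_x^n\|^2\cdot\mathrm{const}\le C\gamma^n\to 0,$$
so $K_{x,y}$ is affine. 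This gives both leafwise smooth dependence and the affine structure.

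The main technical obstacle is the uniform $C^\infty$ convergence in the second paragraph: it requires an organized chain-rule expansion showing that, after $k$ differentiations, every term is eventually dominated by $\|(A_x^n)^{-1}\|\cdot\|A_x^n\|^2$ times a controlled $|v|$-factor. The essential input, beyond the 1/2 pinching itself, is the vanishing of $\psi_y$ and $D\psi_y$ at $0$ together with the near-linear behavior $\f_x^n(v)\approx A_x^n v$; once this bookkeeping is arranged, the remaining statements follow mechanically from the algebraic dynamical relations.
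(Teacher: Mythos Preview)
Your approach is correct and is in fact the classical one from \cite{S,KS}: define $\h_x$ as the limit of $(A_x^n)^{-1}\circ\f_x^n$, control the telescoping differences via the $1/2$-pinching \eqref{12pinch}, and deduce uniqueness and the affine structure from the resulting functional equations. The only point that deserves an extra sentence is the bound $\|(A_y^n)^{-1}\|\cdot\|A_x^n\|^2\le C\gamma^n$ in your argument for (v): the pinching hypothesis is stated at a single base point, so one must first observe that $\|A_x^n\|\le C\|A_y^n\|$ uniformly in $n$ when $y\in\w_x$, which follows from the exponential convergence $d(f^jx,f^jy)\to 0$ and the leafwise regularity of $Df$. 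With that noted, your sketch is complete.

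It is worth pointing out that the present paper does \emph{not} give its own proof of Theorem~\ref{Linear}; the theorem is quoted from \cite{S,KS} as a summary of the linearization case, and Remark~\ref{LinearR} only comments on the regularity needed. The paper's own machinery (Theorems~\ref{NFext} and \ref{NFfol}) proceeds quite differently: rather than the explicit iterative limit you use, it first solves for the Taylor jet of $\h_x$ by inverting the linearized operator $\Phi$ on polynomial bundles, and then obtains $\h_x$ as the fixed point of a contraction $T$ on a Banach space of $C^{N,\alpha}$ sections. That framework only recovers linearization under the stronger \emph{constant-rate} pinching of Corollary~\ref{cor1/2pinch} (the Mather spectrum lies in a single interval of endpoint ratio $<1+\alpha$), not the pointwise condition \eqref{12pinch}. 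So your direct argument actually proves a sharper statement than what the paper's general theorem yields when specialized to $\ell=1$; the trade-off is that the paper's fixed-point method extends uniformly to the polynomial (sub-resonance and resonance) normal forms where no closed iterative formula like yours is available.
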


We note that non-stationary linearization is the only case when the $\h_x$, and hence $\p_x$, are unique 
under assumptions (i) and (ii) and sufficient regularity. This uniqueness immediately implies the description of
centralizer as in part (3) of Theorem \ref{NFfol}. Together with (v), it also easily gives
smooth dependence on $x$ along the leaves of $\w$.

Under stronger 1/2 pinching assumption on rates in place of  \eqref{12pinch},  finite regularity version 
follows from our general results, see Corollary \ref{cor1/2pinch}. However, finite regularity results can 
also be obtained using \eqref{12pinch}:

\begin{remark} \label{LinearR} 
The proof of existence of $\h$ in \cite[Proposition 4.1]{S} is for $C^N$ with $N$ sufficiently large
and can be seen to work for $N=2$. 

Parts {\em (iv)} and {\em (v)} hold under the assumption that $f$ and $\h_x$ are $C^2$. 
This is clear from the proof of \cite[Lemma 4.1]{S} and \cite[Proposition 4.6]{KS}. More precisely, 
it suffices to assume that  $Df|_{T_x\w}$ and $D\h_x$ are Lipschitz along the leaves of $\w$ with
uniform constant for all local leaves. More generally, uniqueness holds if they are $\a$-H\"older
under stronger pinching assumption with the term $\|\,Df^n|_{T_x\w}\,\|^2$ in \eqref{12pinch} 
replaced by $\|\,Df^n|_{T\w}\,\|^{1+\a}$ (cf. Corollary \ref{cor1/2pinch}). In the particular case 
when $\w$ is a one-dimensional foliation, the uniqueness holds if $\h$ is $C^1$ \cite{KL}.
\end{remark}

We note that equation \eqref{12pinch} with $1+\beta$ in place of $2$ is precisely the 
{\em fiber bunching} assumption  on the linear cocycle $Df|_{T_x\w}$ relative to the contraction 
along $\w$, which in particular ensures existence of cocycle holonomies along $\w$. 
In fact, the holonomies are given by the derivatives of the transition maps in  (v).

\section{Preliminaries and notations}\label{Prelim} 

\subsection{Smooth extensions} \label{Smooth ext}
Let $\E$ be a continuous vector bundle over a compact metric space $\M$, let  $\V$ be
 a neighborhood of the zero section in $\E$, and let $f$ be a homeomorphism of $\M$. 
We consider an extension $\f : \V \to \E$ that projects to $f$ and preserves the zero section.
We assume that the corresponding fiber maps $\f_x: \V_x \to \E_{f(x)}$ are $C^r$ diffeomorphisms.

 \vskip.1cm

If $r=N\in \N$, we will assume that $\f_x$ depend continuously on $x$ in $C^N$ topology. 
To obtain sharper results, we will also use {\em H\"older condition at the zero section.} 
We assume that the fibers $\E_x$ are equipped with a continuous family of Riemannian norms.
We denote by $B_{x,\sigma }$ the closed ball of radius $\sigma >0$ centered at 
$0 \in \E_x$. For $N \in \N$ and $0\le \a \le1$ we denote by $\Cr (B_{x,\sigma })=\Cr (B_{x,\sigma}, \E_{fx})$ 
the space of functions $R: B_{x,\sigma}\to \E_{fx}$ with continuous derivatives up to order 
$N$ on $B_{x,\sigma}$ and, if $\a>0$, with $N^{th}$ derivative satisfying $\a$-H\"older condition at $0$:
\begin{equation}\label{Canorm}
\| D^{(N)} R\|_\a =\sup \,\{ \,\| D^{(N)}_t R - D^{(N)}_0 R\|\cdot \|t\|^{-\a} : \; 0\ne t \in B_{x,\sigma}\} < \infty.
  \end{equation}
 We call $\| D^{(N)} R\|_\a$ the $\a$-H\"older constant of $D^{(N)} R$ at $0$.
We equip the space $\Cr (B_{x,\sigma })$ with the norm
\begin{equation}\label{Crnorm}
  \|R\|_{\Cr (B_{x,\sigma })}=\,
  \max\, \{\, \|  R\|_0, \;\| D^{(1)} R\|_0,\; ..., \;\| D^{(N)} R\|_0, \;\| D^{(N)} R\|_\a \, \},   \end{equation}
where $\| D^{(k)} R\|_0=\sup\, \{ \| D^{(k)}_t R\|: \; t \in B_{x,\sigma}\}$ and the last term is omitted if $\a=0$.

  \vskip.1cm
  
\begin{definition} \label{Cr ext} 
We say that $\f$ is a $\Cr$ extension of $f$, $N \in \N$ and $0\le\a \le1$,
if for some $\sigma>0$ the fiber maps $\f_x:  B_{x,\sigma} \to \E_{f(x)}$ are $\Cr$ diffeomorphisms which depend continuously on $x$ in $C^N$ topology and the norms $\|\f _x\|_{\Cr (B_{x,\sigma })}$ are uniformly bounded.

Similarly, we say that $\h =\{\h_x\}_{x\in X}$, where $\h_x:  B_{x,\sigma} \to \E_{f(x)}$, is a $\Cr$ coordinate change if it is a $\Cr$ extension of $f=\Id$ which preserves the zero section.
\end{definition}


\subsection{Mather spectrum of the derivative} \label{Mather}

For a smooth extension $\f$ we will denote by $F$ its derivative of  at the zero section, that is 
$F : \E \to \E$ is a continuous linear extension of $f$ whose fiber maps are linear isomorphisms
$F_x =D_0 \f_x  : \E_x \to \E_{fx}$. Such a linear extension $F$ induces a bounded linear operator 
$F^*$ on the space of continuous sections of $\E$ by $F^*v(x)=F(v(f^{-1}x))$. The spectrum $Sp \, F^*$  of 
complexification of $F^*$ is called {\em Mather spectrum} of $\fd$. Under a mild assumption that 
non-periodic points of $f$ are dense in $\M$, the Mather spectrum consists of finitely many closed 
annuli centered at $0$, see e.g. \cite{P}, and its {\em characteristic set}
$\La (F) = \{ \la \in \R : \exp \la \in Sp \, F^*\}$ consists of finitely many closed intervals. 
We will assume that $F$ is a contraction and that the Mather spectrum of $F$ is sufficiently narrow. 
\begin{definition} \label{chi ext} 
Let $\e>0$ and $\chi = (\chi_1, \dots, \chi_\ell)$, where $\chi_1<\dots<\chi_\ell<0$.
We say that a linear extension $F$ has {\em $(\chi,\e)$-spectrum} if 
\begin{equation} \label{Lachi}
\La (F) =  \{ \la \in \R : \exp \la \in Sp \, F^*\} \subset \bigcup _{i=1}^\ell (\chi_i-\e,\chi_i+\e)
\end{equation}
\end{definition}
\noindent If $F$ has $(\chi,\e)$-spectrum with disjoint intervals then the bundle $\E$ splits into direct sum 
\begin{equation} \label{splitting}
\E=\E^{1} \oplus \dots \oplus \E^{\ell}
\end{equation} 
of continuous $F$-invariant sub-bundles so that $\La (F|_{\E^i})$ is contained in $(\chi_i-\e,\chi_i+\e)$.  
This can be expressed using a convenient metric \cite{GK}:  for each 
$i=1,..., \ell$ there exists a continuous family of Riemannian norms 
$\|.\|_{x}$ on $\E_x$ such that the splitting \eqref{splitting} is orthogonal and
 \begin{equation} \label{estAEi}
  e^{\chi _i -\e} \| t \|_{x} \le  \| \fd _x (t)\|_{fx} \le e^{\chi _i +\e} \| t\|_{x}
  \quad\text{for every } t \in \E^i_x.
\end{equation}
We will equip $\E$ with such a norm and will suppress the dependence on $x$. We can also summarize \eqref{estAEi} using operator norms
\begin{equation}  \label{estAnorm}
  \| F|_{\E^i_x} \| \le e^{\chi _i +\e}, \quad   \| (F|_{\E^i_x})^{-1} \| \le e^{-\chi _i +\e}, \quad  
\| F_x \| \le e^{\chi _\ell +\e}, \quad   \| (F_x)^{-1} \| \le e^{-\chi _1 +\e}.
\end{equation}


 \subsection{ Sub-resonance and resonance polynomials} \label{polynomials}

We say that a map between vector spaces is {\em polynomial}\, if each component  is given by a polynomial 
in some, and hence every, basis.
We will consider a polynomial map $P: \E_x \to \E_y$ with $P(0_x)=0_y$ and 
split it into components $(P_1(t),\dots,P_{\ell}(t))$, where $P_i: \E_x \to \E_y^i$. 
Each $P_i$ can be written uniquely as a linear combination of  polynomials
of specific homogeneous types.  We say that $Q: \E_x \to \E_y^i $ has {\em homogeneous type} 
$s= (s_1, \dots , s_\ell)$, where $s_1,\dots,s_{\ell}\,$ are non-negative integers, 
if for any real numbers  $a_1, \dots , a_\ell$ and vectors
$t_j\in \E_x^j$, $j=1,\dots, \ell,$ we have 
\begin{equation}\label{stype}
Q(a_1 t_1+ \dots + a_\ell t_\ell)= a_1^{s_1} \cdots  a_\ell^{s_\ell} \cdot Q( t_1+ \dots + t_\ell).
\end{equation}

\begin{definition} \label{SRdef}
We say that a homogeneous type $s= (s_1, \dots , s_\ell)$ for $P_i: \E_x \to \E_y^i$  is  
\begin{equation}\label{sub-resonance}
\text{{\bf sub-resonance} if} \quad \chi_i \le \sum_{j=1}^\ell s_j \chi_j, \quad \text{and \;  {\bf resonance} if} 
\quad \chi_i = \sum_{j=1}^\ell s_j \chi_j.
\end{equation}
We say that a polynomial map $P: \E_x \to \E_y$ is {\em sub-resonance (resp. resonance)}
if each component $P_i$ has only terms of  sub-resonance (resp. resonance) homogeneous types.
We denote by $\s_{x,y}$ (resp. $\r_{x,y}$) the set of all sub-resonance (resp. resonance) polynomials 
$P: \E_x \to \E_y$ with $P(0)=0$ and invertible derivative at $0$.
\end{definition}

Clearly, for any sub-resonance relation we have $s_j=0$ for $j<i$ and $\sum s_j \le \chi_1/ \chi_\ell$.
It follows that sub-resonance polynomials have degree at most 
\begin{equation}\label{degree}
d=d(\chi)= \lfloor \chi_1/\chi_\ell \rfloor.
\end{equation}
We will denote  $\s_{x,x}$ by $\s_x$, which is a finite-dimensional Lie group
 group with respect to the composition \cite{GK}. All groups $\s_x$ are isomorphic, 
 moreover, any map $P\in \s_{x,y}$ induces an isomorphism between $\s_x$ and $\s_y$
by conjugation. Any invertible linear map $A:\E_y \to \E_x$ which respects the splitting 
induces an isomorphism between the groups $\s_x$ and $\s_y$. 
 Similar statements hold for the resonance groups $\r_x=\r_{x,x}$.



Note that a linear map is resonance, resp. sub-resonance, if and only if it preserves the splitting 
 \eqref{splitting}, resp. the associated flag of fast sub-bundles:
\begin{equation}\label{fastflag}
\E_x^1=\V_x^1 \subset \V_x^2 \subset ... \subset \V_x^l =\E_x, \quad \text{where }\; \V^i_x= \E^1_x \oplus \dots \oplus \E^i_x
\end{equation} 
While the notion of resonance polynomials depends on the splitting, the notion of sub-resonance 
polynomials depends only on the flag \eqref{fastflag} and sub-resonance polynomials preserve the flag, 
see \cite[Proposition 3.2]{KS16}.

 \subsection{Narrow spectrum} \label{narrow}

Now for a given $\chi = (\chi_1, \dots, \chi_\ell)$, where $\chi_1<\dots<\chi_\ell<0$,
we will define $\e_0=\e_0 (\chi)>0$ which ensures that the spectrum is sufficiently narrow.
Informally, we choose it so that if $F$ has $(\chi,\e)$-spectrum for some $\e<\e_0 (\chi)$ 
then all Mather spectrum sub-resonances and resonances of $F$ come from the point spectrum 
$\chi$ and also any non-resonance homogeneous type is contracted by forward or backward 
iterates of $F$. This condition is stronger than the narrow band spectrum in \cite{GK}.

We define $\tilde \la<0$ as the largest value of $-\chi_i + \sum_{j=1}^\ell s_j \chi_j$ over all  
$i\in \{1, \dots, \ell \}$ and non-negative integers  
$s_1,\dots,s_{\ell}\,$ such that this value is negative, that is, they do not satisfy any 
sub-resonance relation \eqref{sub-resonance}:
\begin{equation}\label{lambda}
\tilde \la =\max \, \{ -\chi_i + \sum s_j \chi_j <0\,\} \; \text{ and let} \;  \la = \max \, \{ \tilde \la,  -\chi_1+ (d+1) \chi_\ell \}<0.
\end {equation} 
The maximum exists since there are at most finitely many 
values of  $-\chi_i + \sum s_j \chi_j $ greater than any given number.

Similarly, we define $\mu<0$ as the largest value of $\chi_i - \sum_{j=1}^\ell s_j \chi_j$ 
over all  $i\in \{1, \dots, \ell \}$ and non-negative integers  
$s_1,\dots,s_{\ell}\,$ such that this value is negative, that is, they satisfy some sub-resonance 
relation which is not a resonance one \eqref{sub-resonance} (we will refer to such 
homogeneous types as {\em strict sub-resonance}):
\begin{equation}\label{mu}
\mu =\max \, \{ \chi_i - \sum s_j \chi_j <0\,\}.
\end {equation} 
The maximum exists since there are at most  finitely many sub-resonance relations.
Finally, we define
\begin{equation}\label{epsilon}
\e_0=\e_0 (\chi)=\min\,\{\, -\chi_\ell, -\lambda/(d+2) , -\mu/(d+1) \,\}>0.
\end{equation}

\section{Statements of results}\label{Snormalforms} 

First we summarize our basic notations and assumptions.

\begin{assumptions} \label{ass} In this section, \\
$f:X\to X$ is a homeomorphism of a compact metric space $X$,\\
$\E$ is a continuous vector bundle over $X$, equipped with a continuous Riemannian metric,\\
$\V$ is a neighborhood of zero section in $\E$ and $B_{x,\sigma }  \subset  \V_x$ for some $\sigma >0$ and all $x\in X$, \\
$\f:\V  \to \E$ is a $\Cr$ extension of $f$ (see Def. \ref{Cr ext}) that preserves the zero 
 section,  \\
$\f$ is contracts in the sense $\| \f_x(t)\| \le \xi \|t\| $ for some $\xi <1$,  and all $x\in X$ and  $t\in B_{x,\sigma }$, \\
 $F:\E\to \E$ is the derivative of  $\fe$ at the zero section, 
 $F_x=D_0\fe_x :\E_x \to \E_{fx}$, \\
$F$ has $(\chi,\e)$-spectrum (see Def. \ref{chi ext}) for some $\chi = (\chi_1, \dots, \chi_\ell)$, with $\chi_1<\dots<\chi_\ell<0$, 
and some $\e<\e_0=\e_0 (\chi)$ given by \eqref{epsilon}.
 \end{assumptions}

\begin{remark} \label{assSp} As we outlined in Section \ref{Mather}, if $F$ has $(\chi,\e)$-spectrum 
then there is a continuous Riemannian metric on $\E$ and  a continuous orthogonal 
$F$-invariant splitting $\E=\E^{1} \oplus \dots \oplus \E^{l}$ such that for each $i=1,..., \ell$ 
and all $x\in X$ we have
 \begin{equation} \label{estAEi'}
  e^{\chi _i -\e} \| t \| \le  \| \fd _x (t)\| \le e^{\chi _i +\e} \| t\|
  \quad\text{for every } t \in \E^i_x,
\end{equation}
which also yields \eqref{estAnorm}. This is the property that we use in the proof of Theorem \ref{NFext}. 
\end{remark}

We recall that $\Cr$ is the space of $C^N$ functions with $N^{th}$ derivative satisfying 
$\a$-H\"older condition at $0$ \eqref{Canorm}.
We will require that the smoothness $N+\a$ is higher than the ``critical regularity" 
$\chi_1 / \chi_\ell \ge 1$. If $N\ge 2$ we allow $\a=0$. 


\begin{theorem}[Normal forms for contracting extensions]\label{NFext} $\;$ \\
Let $\f$ be an extension of $f$ satisfying Assumptions \ref{ass}. Suppose that $N\in\N$, $0\le \a \le1$,
\begin{equation}\label{crit}
 \nu=\chi_1-(N+\alpha)\chi_\ell\,>0  \quad \text {and }\quad  \e< \nu/( N+\a+1).
\end{equation}
Then 
\noindent {\bf (1)}  There exists a $\Cr$ coordinate change 
$\h=\{ \h_x\}_{x\in X}$ (see Def. \ref{Cr ext}) with diffeomorphisms   
 $\h_x : B_{x,\sigma} \to \E_x$  satisfying $\h_x(0)=0$ and $D_0 \h_x =\Id \,$
 which conjugates $\fe$ to a continuous polynomial extension $\pe$ of sub-resonance type
 (see Def. \ref{SRdef}): 
 \begin{equation}\label{nfs}
\h_{fx} \circ \f_x =\p_x \circ \h_x, \; \text{ where }  \;  \p_x\in \s_{x,fx}
 \; \text{ for all } x\in X. 
\end{equation}
\vskip.1cm

\noindent {\bf (1')}  There exists a $\Cr$ coordinate change 
$\h'=\{ \h_x'\}_{x\in X}$ with diffeomorphisms   
 $\h_x' : B_{x,\sigma} \to \E_x$  satisfying $\h_x'(0)=0$ and $D_0 \h_x' =\Id \,$
 which conjugates $\fe$ to a continuous polynomial extension $\pe'$ of resonance type: 
  \begin{equation}\label{nfr}
\h_{fx}' \circ \f_x =\p_x' \circ \h_x', \; \text{ where }  \;  \p_x\in \r_{x,fx}
 \; \text{ for all } x\in X.
\end{equation}
 
\vskip.1cm

\noindent {\bf (2)} Suppose  $\tilde \h=\{ \tilde \h_{x}\}_{x\in X}$  is another $\Cr$ coordinate 
change as in (1) conjugating $\f$ to a sub-resonance polynomial extension  $ \tilde \p$. 
Then there exists a continuous family $\{ G_x\}_{x\in X}$ with $G_x \in \s_x$ such that 
$\h_x=G_x \circ \tilde \h_{x}$. Moreover, if $D^{(n)}_0\tilde \h_x=D^{(n)}_0\h_x$ for all 
$n=2,...,d= \lfloor \chi_1/\chi_\ell \rfloor$, then $\h_x= \tilde \h_{x}$ for all $x \in X$. 

\vskip.1cm

\noindent {\bf (2')} Suppose  $\tilde \h'=\{ \tilde \h_{x}'\}_{x\in X}$ is another $\Cr$ coordinate 
change as in (1') conjugating $\f$ to a resonance polynomial extension  $ \tilde \p'$. 
Then there exists a continuous family 
$\{ G_x'\}_{x\in X}$ with $G_x' \in \r_x$ such that 
$\h_x'=G_x' \circ \tilde \h_{x}'$. Moreover, if 
$D^{(n)}_0\tilde \h_x'=D^{(n)}_0\h_x'$ for all $n=2,...,d= \lfloor \chi_1/\chi_\ell \rfloor$, then
$\h_x'= \tilde \h_{x}'$ for all $x \in X$. 

\vskip.1cm

\noindent  {\bf (3)} 
Let $g:X\to X$ be a homeomorphism commuting with $f$ and $\g:\V  \to \E$ be a $C^{N',\a'}$ 
extension of $g$ which preserves the zero section and commutes with $\f$.
Suppose that $N'\in \N$ and $0\le \a' \le1$ satisfy $N' \le N$, $N'+\a' \le N+\a$, and
\begin{equation}\label{crit'}
  \nu'=\chi_1-(N'+\alpha')\chi_\ell\,>0  \quad \text {and }\quad  \e< \nu'/( N'+\a'+1).
\end{equation}
Then the coordinate changes $\h$ and $\h'$ conjugate $\g$ to continuous
sub-resonance  and resonance polynomial extension respectively, that is 
 \begin{equation}\label{cent}
\h_{gx} \circ \g_x \circ \h_x^{-1} \in \s_{x,fx} \; \text{ and }  \;  \h_{gx}' \circ \g_x \circ (\h_x')^{-1} \in \r_{x,fx}.
 \; \text{ for all } x\in X.
\end{equation}
In particular, $\g$ is a $\Cr$ extension

\end{theorem}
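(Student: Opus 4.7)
The plan is to follow the two-step strategy of \cite{KS15,KS16}: first construct a polynomial pre-conjugacy of degree $d=\lfloor\chi_1/\chi_\ell\rfloor$ that normalizes the $d$-jet of $\f_x$ at $0$, then promote it to the full $\Cr$ conjugacy by a fixed-point argument. For (1), one builds polynomials $\hd_x$ of degree $\le d$ with $\hd_x(0)=0$ and $D_0\hd_x=\Id$ such that the $d$-jet at $0$ of $\hd_{fx}\circ\f_x\circ\hd_x^{-1}$ lies in $\s_{x,fx}$. This is done by induction on $k=2,\dots,d$: at each step the non-sub-resonance part of the $k$-th Taylor coefficient must be killed by choosing an appropriate $k$-homogeneous term of $\hd_x$, which amounts to inverting a conjugation-by-$F_x$ operator on the finite-dimensional space of non-sub-resonance $k$-homogeneous polynomials of the relevant type. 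The bound $\e<-\mu/(d+1)$ built into $\e_0(\chi)$ makes this operator invertible via a geometric series; the sub-resonance part is absorbed into the new polynomial $\p_x$. Case (1') is parallel, killing only non-resonance terms, and solvability relies on the companion bound $\e<-\la/(d+2)$ which controls non-resonance types up to degree $d+1$.

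With $\hd_x$ in hand, one solves $\h_{fx}\circ\f_x=\p_x\circ\h_x$ in $\Cr$ by writing $\h_x=\hd_x+R_x$ with $R_x$ vanishing to order $d+1$ at $0$. The conjugacy equation recasts as a fixed-point equation $R=TR$ on the Banach space of continuous families $\{R_x\}$ with uniformly bounded $\Cr$-norm at the zero section (Def.\ \ref{Cr ext}). The operator $T$ is obtained by solving for $R_{fx}$ and pulling back through $\f$; its action on the $C^{N,\alpha}$-at-zero norm contracts at rate essentially $e^{\nu-(N+\alpha+1)\e}$ under the estimates \eqref{estAnorm}. The hypothesis $\e<\nu/(N+\alpha+1)$ is exactly what makes this rate less than one, so $T$ is a contraction on a small closed ball, and its unique fixed point gives $\h$. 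The resonance case (1') runs identically with $\r$ in place of $\s$.

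For uniqueness (2), let $G_x=\h_x\circ\tilde\h_x^{-1}$, a $\Cr$ family with $G_x(0)=0$, $D_0G_x=\Id$, satisfying $G_{fx}\circ\tilde\p_x=\p_x\circ G_x$. Expanding in Taylor series at $0$ and matching coefficients, one checks inductively that the non-sub-resonance components of the coefficients of $G_x$ of orders $2,\dots,d$ must vanish, since any such term would have to solve an unsolvable homogeneous cohomological equation; thus the $d$-jet $G_x^{(d)}$ lies in $\s_x$. The remainder $G_x-G_x^{(d)}$ then satisfies a linear conjugacy equation whose only $\Cr$ solution vanishing to order $d+1$ at $0$ is zero, by the same contraction used in the smoothing step. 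Hence $G_x\in\s_x$ depends continuously on $x$. If in addition $D^{(n)}_0\h_x=D^{(n)}_0\tilde\h_x$ for $n=2,\dots,d$, then $G_x$ is a sub-resonance polynomial of degree $\le d$ with identity $d$-jet, so $G_x=\Id$ and $\h=\tilde\h$. Case (2') is identical with $\r$ replacing $\s$.

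For the centralizer (3), set $\G_x=\h_{gx}\circ\g_x\circ\h_x^{-1}$. The relations $fg=gf$ and $\f\circ\g=\g\circ\f$ give by direct computation $\G_{fx}\circ\p_x=\p_{gx}\circ\G_x$, so $\G_x$ intertwines sub-resonance polynomials. As a composition, $\G_x$ is $C^{N',\alpha'}$ with $\G_x(0)=0$ and invertible linearization. Running the argument of the previous paragraph with $(N',\alpha',\nu')$ in place of $(N,\alpha,\nu)$ — where $\e<\nu'/(N'+\alpha'+1)$ provides exactly the contraction needed at the weaker regularity — yields $\G_x\in\s_{x,gx}$, and likewise $\h_{gx}'\circ\g_x\circ(\h_x')^{-1}\in\r_{x,gx}$. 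As a consequence $\g_x=\h_{gx}^{-1}\circ\G_x\circ\h_x$ is $\Cr$. The main obstacle throughout is the smoothing/contraction step in the second paragraph: one must work in the $\Cr$-at-zero norm of Definition \ref{Cr ext}, which transforms cleanly under composition with $\f_x$ (unlike a global H\"older seminorm on a ball), and one must calibrate the thresholds $\e_0(\chi)$ and $\e<\nu/(N+\alpha+1)$ so that the cohomological solvability in Step 1 and the fixed-point contraction in Step 2 hold simultaneously at the sharp regularity threshold $N+\alpha>\chi_1/\chi_\ell$.
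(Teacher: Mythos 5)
Your overall architecture matches the paper's: inductively normalize the jet at the zero section, then pass to the full $\Cr$ conjugacy by a fiberwise contraction mapping in the $\Cr$-at-zero norm, and derive (2), (2'), (3) from the uniqueness inherent in that construction.  However, there are three concrete gaps.

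First, in the pre-conjugacy step you only construct the jet of $\h_x$ to order $d=\lfloor\chi_1/\chi_\ell\rfloor$ and then write $\h_x=\hd_x+R_x$ with $R_x$ vanishing to order $d+1$.  This is not enough for the contraction step as described.  The $\Cr$-at-zero seminorm $\|D^{(N)}R\|_\alpha$ controls the lower derivatives $\|D^{(n)}_tR\|$ via estimates such as $\|D^{(n)}_tR\|\le\|t\|^{1+\alpha}\|D^{(N)}R\|_\alpha$ (for $n<N$), and those estimates require $D_0^{(k)}R=0$ for all $k\le N$, not just $k\le d$.  Moreover, with $\hd_x$ only of degree $d$ the conjugated extension $\hd_{fx}\circ\f_x\circ\hd_x^{-1}$ agrees with $\p_x$ only to order $d$, so the fixed-point operator $T$ does not preserve the space of families vanishing to order $N$.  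The paper therefore constructs the Taylor polynomial $\sum_{n\le N} H^{(n)}_x$ all the way to degree $N$ (the normal form $\p_x$ still has degree $\le d$, but $\h_x^N$ does not), replaces $\f$ by the jet-normalized version, and only then sets up the contraction on families vanishing to order $N$.  Your argument needs this additional $d<n\le N$ induction.

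Second, in part (3) you propose to run the uniqueness argument directly on $\G_x=\h_{gx}\circ\g_x\circ\h_x^{-1}$, but $\G_x$ has $D_0\G_x=D_0\g_x\ne\Id$, whereas the uniqueness statement (2) is only about coordinate changes with identity derivative.  The missing step is to first show (from the commutation and the spectral gap) that $D_0\g_x$ is resonance, i.e. preserves the splitting; this uses both $\e<-\lambda/(d+2)$ (forward iterates, preservation of the flag) and $\e<-\mu/(d+1)$ (backward iterates, preservation of the splitting).  One then normalizes by setting $\tilde\h_x=(D_0\g_x)^{-1}\circ\h_{gx}\circ\g_x$, which does have identity linear part, verifies it conjugates $\f$ to a sub-resonance extension, and applies (2) to conclude.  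Without that normalization the argument does not go through.

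Third, you have the roles of the two auxiliary bounds in $\e_0(\chi)$ reversed.  For the sub-resonance construction in part (1) one must kill non-sub-resonance homogeneous types, and the relevant contraction is the conjugation $R\mapsto F^{-1}_x\circ R\circ F_x$ over $f^{-1}$, controlled by $\e<-\lambda/(d+2)$.  For the passage from sub-resonance to resonance form in (1') one must kill strict sub-resonance types, and the relevant contraction is the \emph{inverse} conjugation $R\mapsto F_x\circ R\circ F_x^{-1}$ over $f$, controlled by $\e<-\mu/(d+1)$.  Your proposal assigns $-\mu/(d+1)$ to the sub-resonance step and $-\lambda/(d+2)$ to the resonance step, which is backwards and would lead to the wrong inequalities when you try to make the geometric series converge.
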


\begin{corollary} \label{Cinf}
Suppose that $\f$ in the theorem is a $\Ci$ extension. Then the coordinate 
changes $\h$ in part (1) and $\h'$ in part (1')  are also $\Ci $.
\end{corollary}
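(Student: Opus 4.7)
The plan is to apply Theorem \ref{NFext}(1) and (1') with arbitrarily large smoothness index $N$ and then invoke the uniqueness clauses (2) and (2') to bootstrap the regularity of a given coordinate change up to $\Ci$.

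First I would verify that the spectral narrowness condition forces the smallness requirement $\e < \nu/(N+\alpha+1)$ of \eqref{crit} to hold for all sufficiently large $N$. Taking $\alpha=0$, which is permitted once $N\ge 2$, a direct computation gives
\[
\frac{\nu}{N+1} \;=\; \frac{\chi_1-N\chi_\ell}{N+1} \;=\; -\chi_\ell + \frac{\chi_1+\chi_\ell}{N+1},
\]
which, since $\chi_1+\chi_\ell<0$, approaches $-\chi_\ell$ from below as $N\to\infty$. By \eqref{epsilon} we have $\e_0\le -\chi_\ell$, so our standing hypothesis gives $\e<-\chi_\ell$ strictly. Consequently there is a threshold $N^{\ast}$ such that $\e<\nu/(N+1)$ for every integer $N\ge N^{\ast}$. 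For each such $N$, Theorem \ref{NFext}(1) applied to the $\Ci$ (in particular $C^{N,0}$) extension $\f$ produces a $C^{N,0}$ coordinate change $\h^{(N)}$ conjugating $\f$ to a continuous sub-resonance polynomial extension.

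Next I would fix the given coordinate change $\h$ from Theorem \ref{NFext}(1) and compare it, within a common regularity class, to each $\h^{(N)}$ via Theorem \ref{NFext}(2). This yields a continuous family $\{G_x^{(N)}\}_{x\in X}$ with $G_x^{(N)}\in\s_x$ such that
\[
\h_x \;=\; G_x^{(N)}\circ \h^{(N)}_x \quad\text{for every } x\in X.
\]
By \eqref{degree}, each $G_x^{(N)}$ is a polynomial of degree at most $d=\lfloor\chi_1/\chi_\ell\rfloor$ whose coefficients depend continuously on $x$; in particular it is $\Ci$ on $\E_x$. Composing a polynomial of bounded degree with the $C^{N,0}$ map $\h^{(N)}_x$ therefore produces a map $\h_x$ that is $C^N$ on $B_{x,\sigma}$, with continuous dependence on $x$ in the $C^N$ topology. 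Since $N$ can be taken arbitrarily large, $\h$ is $\Ci$ in the sense of Definition \ref{Cr ext}. The identical argument, with (1), (2), $\s_x$ replaced by (1'), (2'), $\r_x$, proves smoothness of $\h'$.

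The main obstacle is the algebraic bookkeeping in the first step: one must confirm that the threshold $-\chi_\ell$ appearing in the definition of $\e_0$ in \eqref{epsilon} is precisely what renders the regularity bootstrap feasible. The computation above shows that this is exactly the case, as $\nu/(N+\alpha+1)$ approaches $-\chi_\ell$ in the limit, so narrowness below $-\chi_\ell$ is both the minimal and the natural hypothesis to support this argument.
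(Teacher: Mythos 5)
Your proof is correct and takes essentially the same approach as the paper: run the construction of part (1) at every admissible regularity level $N$ and use part (2) to identify the results. The only minor difference is that the paper invokes the ``moreover'' (uniqueness) clause of (2) after fixing Taylor data so the $C^N$ families literally coincide, whereas you use the first (non-uniqueness) clause and observe that the polynomial discrepancy $G_x^{(N)}\in\s_x$ cannot degrade smoothness; both readings of (2) give the same bootstrap.
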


\begin{remark} [Global version] \label{NFglob}
Suppose that $\f :\E\to \E$ is a globally defined extension which satisfies the assumptions of 
Theorem \ref{NFext} and either contracts fibers or, more generally, satisfies the property that 
for any compact set $K \subset \V$ and any 
neighborhood $U$ of the zero section we have $\f^n(K) \subset U$ for all sufficiently large $n$.
Then the coordinate changes $\h$ and $\h'$ can be uniquely extended   ``by invariance"
$\h_x  = (P_x^n)^{-1} \circ  \h_{f ^n (x)} \circ \f_x^n$ to the family of global $\Cr$  coordinate changes 
with diffeomorphisms $\h_x,\h'_x :  \E_{x} \to \E_{x}$ satisfying \eqref{nfs} and \eqref{nfr} respectively.
Moreover, if the extension $\g$ as in (3) is also globally defined, then it satisfies \eqref{cent} globally.
 \end{remark}

\subsection{Normal forms for contracting foliations}

Now we apply the results above in the context of diffeomorphisms with invariant contracting foliation. 

Let $f$ be a $C^r$ diffeomorphism of a compact manifold  $\M$. We will consider $r>1$,
and for $r \notin \N$ we will understand $C^r$ in the usual sense that the derivative of order 
$N=\lfloor r \rfloor$ is H\"older with exponent $\a=r-\lfloor r \rfloor$. We will consider an $f$-invariant 
continuous foliation $\w$ of $X$ with {\em uniformly $C^r$ leaves}, by which we mean that for some $R>0$
the balls $B^\w(x,R)$ of radius $R$ in the intrinsic Riemannian metric of the leaf can be given by $C^r$ embeddings which depend continuously on $x$ in $C^N$ topology and, if $r \notin \N$,
have $\a$-H\"older derivative of order $N$ with uniformly bounded H\"older constant.
Similarly, for such a foliation we will say that a function $g$ is {\em uniformly $C^r$ along $\w$} 
if its restrictions to $B^\w(x,R)$ depend continuously on $x$ in $C^N$ topology and
have $\a$-H\"older derivative of order $N$ with uniformly bounded H\"older constant.
We also allow $r=\infty$, in which case uniformly $\Ci$ means uniformly $C^N$ for each $N$.


\begin{theorem}[Normal forms for contracting foliations]\label{NFfol} 
Let $f$ be a $C^r$, $r\in (1,\infty ]$, diffeomorphism of a smooth compact manifold $\M$,
and let $\w$ be an $f$-invariant topological foliation of $\M$ with uniformly $C^r$ leaves. 
Suppose that 
the linear extension $F=Df |_{T\w}$ has $(\chi,\e)$-spectrum  (or alternatively F satisfies the condition in 
Remark \ref{assSp}) for some $\chi = (\chi_1, \dots, \chi_\ell)$, 
where $\chi_1<\dots<\chi_\ell<0$, and some $\e<\e_0=\e_0 (\chi)$ given by \eqref{epsilon}.
Suppose also that $r> \chi_1/ \chi_\ell$ and
\begin{equation}\label{crit r}
 \e< \nu/(r+1),   \quad \text {where }\quad \nu=\chi_1-r \chi_\ell\,>0.
\end{equation}

\noindent Then {\bf (1)} There exists a family $\{ \h_x \} _{x\in \M}$ of $C^r$ diffeomorphisms  
$\,\h_x: \w_x \to T_x\w$ satisfying $\h_x(0)=0$ and $D_0 \h_x =\Id \,$ such that  for each $x \in \M$,
 $$
 \p_x =\h_{f(x)} \circ f \circ \h_x ^{-1}:T_{x}\w \to T_{f{(x)}}\w \text{ is in  }\s_{x,fx}. 
 $$
The maps , $\,\h_x |_{B^\w(x,R)}$ depend continuously on $x \in \M$ in $C^N$ topology with
$N=\lfloor r \rfloor$ and, if $\a=r-N>0$, they have $\a$-H\"older derivative of order $N$ with 
uniformly bounded H\"older constant.
 \vskip.2cm

\noindent  {\bf (1')} There exists a family $\{ \h'_x \} _{x\in \M}$ of  diffeomorphisms  
as in (1)  so that  for all $x \in \M$,
 $$
 \p_x =\h_{f(x)}' \circ f \circ (\h_x') ^{-1}:T_{x}\w \to T_{f{(x)}}\w \text{ is in  }\r_{x,fx}. 
 $$

\noindent {\bf (2)} Suppose  $\tilde \h=\{ \tilde \h_{x}\}_{x\in X}$ is another family of 
diffeomorphisms as in (1) conjugating $f|_\w$ to sub-resonance polynomials $ \tilde \p_x \in \s_{x,fx}$. 
Then there exists a continuous family 
$\{ G_x\}_{x\in X}$ with $G_x \in \s_x$ such that 
$\h_x=G_x \circ \tilde \h_{x}$. Moreover, if 
$D^{(n)}_0\tilde \h_x=D^{(n)}_0\h_x$ for all $n=2,...,d= \lfloor \chi_1/\chi_\ell \rfloor$, then
$\h_x= \tilde \h_{x}$ for all $x \in X$. 

\vskip.2cm

\noindent {\bf (2')} Suppose  $\tilde \h'=\{ \tilde \h_{x}'\}_{x\in X}$  is another family of 
diffeomorphisms as in (1') conjugating $f|_\w$ to resonance polynomials $ \tilde \p_x \in \r_{x,fx}$
Then there exists a continuous family 
$\{ G_x'\}_{x\in X}$ with $G_x' \in \r_x$ such that 
$\h_x'=G_x' \circ \tilde \h_{x}'$. Moreover, if 
$D^{(n)}_0\tilde \h_x'=D^{(n)}_0\h_x'$ for all $n=2,...,d= \lfloor \chi_1/\chi_\ell \rfloor$, then
$\h_x'= \tilde \h_{x}'$ for all $x \in X$. 

\vskip.2cm
\noindent {\bf (3)} Let $g$ be a homeomorphism of $\M$ which commutes with $f$,  preserves $\w$, 
 and is uniformly $C^{r'}$  along the leaves of $\w$. Suppose that $1<r'\le r$ satisfies
\begin{equation}\label{crit r'}
\nu'=\chi_1-r' \chi_\ell\,>0   \quad \text {and }\quad \e< \nu '/(r'+1).
\end{equation}
Then $Q_x=\h_{g(x)} \circ g \circ \h_x ^{-1} \in \s_{x,gx}$ and 
 $Q_x'=\h_{g(x)}' \circ g \circ \ (\h_x') ^{-1}\in \r_{x,gx}$ for all $x \in \M$.  
 In particular, $\g_x$ is in is uniformly $C^{r}$  along the leaves of $\w$.

\vskip.2cm

\noindent {\bf (4)}  For any  $x \in \M$ and $y \in \w_x$, the maps 
$\h_y \circ \h_x^{-1}$ and $\h_y' \circ (\h_x') ^{-1}$ from $T_{x}\w$ to $T_{y}\w$ are
compositions of a sub-resonance polynomial in $\s_{x,y}$ with a translation.

\vskip.2cm

\noindent {\bf (5)} The family $\{ \h_x \} _{x\in \M}$ as in (1) can be chosen so that $\,\h_x$  
which depends $C^{\lfloor r \rfloor}$ on $x$  along the leaves of $\w$.

\end{theorem}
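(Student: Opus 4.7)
The plan is to leverage the non-uniqueness description in part (2) together with the leaf-wise structure from part (4). By part (4), for any leaf $\w_{x_0}$ and $y \in \w_{x_0}$ we have
\begin{equation*}
\h_y \circ \h_{x_0}^{-1}(t) \;=\; S_{x_0,y}(t) + \h_y(x_0), \qquad S_{x_0,y} \in \s_{x_0,y},
\end{equation*}
so along any single leaf the entire family is determined by $\h_{x_0}$ together with the affine data $(S_{x_0,y}, \h_y(x_0))$. Showing that $\{\h_x\}$ can be chosen $C^{\lfloor r \rfloor}$ along $\w$ is therefore equivalent to showing that this affine data varies $C^{\lfloor r \rfloor}$ with $y \in \w_{x_0}$.

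I would construct the desired family directly, mirroring the iterative scheme that underlies the existence proof of part (1), applied after a local identification of $\w$ with $T\w$ via a leafwise exponential map. Writing $\p_x^{(n)} = \p_{f^{n-1}x} \circ \cdots \circ \p_x$, the normal form is produced as the limit of approximations
\begin{equation*}
\h_x^{(n)} \;=\; (\p_x^{(n)})^{-1} \circ T_{f^n x} \circ f^n|_{\w_x},
\end{equation*}
where $T_z$ is a polynomial of degree at most $d = \lfloor \chi_1/\chi_\ell \rfloor$ prescribing the $d$-jet of $\h_z$ at $0$. Because $f$ is $C^r$ and $\w$ has uniformly $C^r$ leaves, the iterate $f^n|_{\w_x}$ depends on $x$ in the $C^r$ topology along $\w$ for every $n$. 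The polynomials $\p_x^{(n)}$ are built algebraically from $Df|_{T\w}$ and the jets $T_{f^k x}$ along the forward orbit, so if the family $\{T_z\}$ is prescribed to be smooth along $\w$—which part (2) guarantees is a consistent choice, parameterized by a finite-dimensional bundle of $d$-jets in $\s_x$—then each $\h_x^{(n)}$ inherits the same leafwise smoothness.

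The principal obstacle is to verify that the telescoping differences $\h_x^{(n+1)}-\h_x^{(n)}$ are summable in the leafwise $C^{\lfloor r \rfloor}$ norm. Faà di Bruno applied along the leaf shows that the leafwise $C^k$ norm of $f^n|_{\w_x}$ for $k\le\lfloor r \rfloor$ grows at most like $e^{nk(\chi_\ell+\e)}$, while the linear part of $(\p_x^{(n)})^{-1}$ expands at most like $e^{-n(\chi_1-\e)}$ with its higher-degree terms controlled by corresponding products. The gap hypothesis $\e < \nu/(r+1)$ with $\nu = \chi_1 - r\chi_\ell$ from \eqref{crit r} is precisely sharp to make these leafwise $C^{\lfloor r \rfloor}$ errors decay geometrically, upgrading the continuous family of part (1) to a family that is $C^{\lfloor r \rfloor}$ along $\w$. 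Once this convergence is in hand, the limit is a $\Cr$ coordinate change satisfying the conclusions of part (1), and by the uniqueness statement of part (2) it coincides with $G_x \circ \h_x$ for a continuous family $\{G_x\} \subset \s_x$, delivering the required refinement.
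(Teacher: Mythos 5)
Your proposal addresses only part (5) of the theorem; parts (1), (1'), (2), (2'), (3), and (4) are invoked as established facts rather than proved. The paper's actual route is different and much shorter: it builds the extension $\f_x$ by restricting $f$ to the leaves and identifying a neighbourhood of $x$ in $\w_x$ with $T_x\w$ via the leafwise exponential map, then verifies Assumptions \ref{ass} with $N=\lfloor r\rfloor$, $\a=r-N$, and applies Theorem \ref{NFext} directly to obtain (1), (1'), (2), (2'), and (3); the globalization to all of $\w_x$ is by the invariance formula of Remark \ref{NFglob}. Part (4) is proved by a separate jet argument (showing the Taylor polynomial of $\h_y\circ\h_x^{-1}$ is sub-resonance and the error vanishes) and part (5) by adjusting the construction in Theorem \ref{NFext} as in Remark \ref{smooth dep}. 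So as a proof of the full theorem your argument is substantially incomplete.

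Even restricted to part (5), there is a genuine gap. You write that the $d$-jets $T_z$ of $\h_z$ at $0$ can be ``prescribed to be smooth along $\w$ --- which part (2) guarantees is a consistent choice.'' Part (2) does not say that. It is a non-uniqueness statement comparing two already-given $\Cr$ coordinate changes and asserting they differ by a continuous family $G_x\in\s_x$; it offers no mechanism for producing jet data that varies smoothly along leaves. The heart of part (5) is exactly this point: in the inductive construction of Theorem \ref{NFext}, the projection $\bar H^{(n)}$ to the quotient bundle $\po^{(n)}/\s^{(n)}$ is determined uniquely and can be shown to inherit leafwise smoothness from the foliation, because that quotient bundle (unlike the complementary sub-bundle $\n^{(n)}$ or the splitting \eqref{splitting}) is as smooth along leaves as the fast flag; one then \emph{chooses} a smooth lift $H^{(n)}$ of $\bar H^{(n)}$, rather than the default lift in $\n^{(n)}$. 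This is the content of Remark \ref{smooth dep} and the argument of \cite{KS15}, and your proposal does not supply it.

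Two further cautions. First, there is a circularity: you invoke part (4) at the outset to describe the leafwise structure of the family, but part (4) is not a free corollary of parts (1)--(3) and requires its own inductive argument. Second, the convergence estimate ``the leafwise $C^k$ norm of $f^n|_{\w_x}$ grows at most like $e^{nk(\chi_\ell+\e)}$'' is a hand-wave; Fa\`a di Bruno for iterates of a contraction produces a sum over partitions in which the dominant behaviour is subtler, and the paper's proof of Theorem \ref{NFext} goes to some length (the Banach-space setup with the weighted H\"older seminorm \eqref{Canorm}, the estimates \eqref{Jest} and \eqref{main term}, and the constants $\delta,\theta,\gamma,r$) precisely to make such a contraction estimate rigorous. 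Your sketch does not reproduce that bookkeeping, so the summability of the telescoping differences in the leafwise $C^{\lfloor r\rfloor}$ topology is asserted rather than shown.
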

 
We note that there is no analog of (5) for the resonance case. Also, the transition maps
$\h_y' \circ (\h_x') ^{-1}$ in (4) are only sub-resonance, so this is just a particular case of  
the result for $\h_y \circ (\h_x) ^{-1}$.
 
 Another way to interpret (4) is to view $\h_x$ as a coordinate chart on 
$\w_x$, identifying it with $T_{x}\w$, and in particular  identifying $T_{y}\w$ with 
$T_{\h_x(y)}(T_{x}\w)$ by $D_y\h_x $.  In this coordinate chart,  
(4) yields that all transition maps $\h_y \circ \h_x^{-1}$ for $y\in \w_x$ are  in the group 
$\bar \s_x$ generated by $ \s_x$ and the translations of $T_{x}\w$.  Clearly, this 
 is a finite dimensional Lie group which acts transitively on $T_{x}\w$.

\begin{remark} \label{smooth flag}
The diffeomorphism $\h_x$ maps the sub-foliations of $\w_x$ by fast leaves tangent to the fast 
flag \eqref{fastflag} to the linear sub-foliations of $T_{x}\w$ by subspaces parallel to the flag,
and the compositions $\h_y \circ \h_x^{-1}$ map flag to flag, see \cite[Section 3.2]{KS15}. 
It follows that these fast sub-foliations of $\w_x$ are as smooth inside $\w_x$ as $\h_x$.
While smoothness of fast sub-foliations is a well-known phenomenon, normal form results
give an alternative proof.
\end{remark} 


 \begin{corollary} \label{cor1/2pinch}
 Under the assumptions of the Theorem \ref{NFfol}, if $\ell=1$, i.e. $\chi_1=\chi_\ell =\chi$, then 
 $\p_x=Df|_{T_x\w}$ in (1) and  $Q_x=Dg|_{T_x\w}$ in (3) are linear normal forms, the family 
 $\{ \h_x \} _{x\in X}$ as in (1) is unique, the maps $\h_y \circ \h_x^{-1} : \E_x \to \E_y$ are affine 
 for all $x \in X$ and $y \in W_x$, and $\h_y$ depends $C^{\lfloor r \rfloor}$-smoothly on $y$ along 
 the leaves of $\w$.

In this cans we can take $\e<  -\a \chi/( 2+\a)$, where $\a=\min \{1, r-1\}$. This means that $Df|_{T\w}$ is a contraction 
whose characteristic set is contained in an open interval with ratio of endpoints at most $1+\a$. 
This is the ``constant rate" version of $1/2$ pinching \eqref{12pinch} (cf. Remark \ref{LinearR}).

 \end{corollary}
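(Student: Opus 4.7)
The plan is to observe that in the one-interval case $\ell=1$ both the sub-resonance and resonance polynomial groups collapse to the general linear group, after which every assertion of the corollary is an immediate specialization of Theorem~\ref{NFfol}. With $\ell=1$ a homogeneous type is a single non-negative integer $s$, and \eqref{sub-resonance} becomes $\chi\le s\chi$ (sub-resonance) or $\chi=s\chi$ (resonance); since $\chi<0$, both force $s\le 1$, and the normalization $P(0)=0$ eliminates $s=0$. Hence $\s_{x,y}=\r_{x,y}$ is exactly the set of invertible linear maps $\E_x\to\E_y$, and $d=\lfloor\chi_1/\chi_\ell\rfloor=1$, so the derivative-matching hypothesis in parts (2) and (2') of Theorem~\ref{NFfol} ranges over the empty set $n=2,\dots,1$ and is vacuously satisfied.

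I would then read off each claim. Differentiating the conjugacy $\h_{fx}\circ f=\p_x\circ\h_x$ at $0_x$ and using $D_0\h_x=\Id$ gives $\p_x=Df|_{T_x\w}$, and the same calculation with $\h_x'$ yields $\p_x'=Df|_{T_x\w}$, so the two normal forms coincide. Uniqueness in (1) now follows from part (2): any alternative family differs from $\h$ by a continuous linear $G_x\in\s_x$ with $D_0G_x=\Id$, and since a linear map equals its own derivative at $0$, $G_x=\Id$. Part (3) of the theorem gives $Q_x\in\s_{x,gx}$ linear with $Q_x=D_0Q_x=Dg|_{T_x\w}$. Statement (4) is immediate from part (4) of the theorem, which expresses $\h_y\circ\h_x^{-1}$ as a sub-resonance polynomial (now linear) composed with a translation, i.e.\ an affine map. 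The $C^{\lfloor r\rfloor}$ dependence of $\h_y$ on $y$ along leaves is part (5) of the theorem.

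Finally I would justify the $\e$-bound. With $\ell=1$, the non-sub-resonance types $s\ge 2$ give $-\chi+s\chi=(s-1)\chi$, maximized at $s=2$, so $\tilde\la=\chi$; combined with $-\chi_1+(d+1)\chi_\ell=\chi$ this gives $\la=\chi$. Strict sub-resonance forces $s=0$, giving $\mu=\chi$, whence $\e_0=\min\{-\chi,\,-\chi/3,\,-\chi/2\}=-\chi/3$. The regularity side of Theorem~\ref{NFfol} requires $\e<\nu/(r+1)=-(r-1)\chi/(r+1)$. An elementary algebraic check shows that $\min\{-\chi/3,\,-(r-1)\chi/(r+1)\}=-\a\chi/(2+\a)$ for $\a=\min\{1,r-1\}$, which is exactly the threshold at which $\|(Df^n|_{T\w})^{-1}\|\cdot\|Df^n|_{T\w}\|^{1+\a}$ decays exponentially, i.e.\ the $1+\a$ variant of \eqref{12pinch}. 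I expect no serious obstacle: the whole corollary is the bookkeeping that Theorem~\ref{NFfol} collapses cleanly when the spectrum has a single interval, and the only mildly nontrivial step is identifying the binding $\e$-bound in this degenerate case.
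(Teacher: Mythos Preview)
Your proposal is correct and follows essentially the same approach as the paper: observe that with $\ell=1$ the sub-resonance group collapses to invertible linear maps so $d=1$, read off each assertion from the corresponding part of Theorem~\ref{NFfol}, and then identify the binding $\e$-bound as $\min\{\e_0,\nu/(r+1)\}=-\a\chi/(2+\a)$. You are in fact somewhat more careful than the paper, which simply declares that $\tilde\la$ and $\mu$ ``are not needed'' and writes $\e_0=-\chi/3$; your explicit computation of $\tilde\la=\la=\mu=\chi$ reaches the same $\e_0$ and is the more honest reading of definitions \eqref{lambda}--\eqref{epsilon}.
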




\section{Proof of Theorem \ref{NFext} and Corollary \ref{Cinf}} 

We begin with the proof of part (1). First we construct Taylor polynomials at zero of degree $N$ 
for $\h_x$ and the corresponding terms of sub-resonance polynomials $\p_x$. Since \eqref{crit} 
implies $N\ge d= \lfloor \chi_1/\chi_\ell \rfloor$,
this will fully define $\p_x$, whose degree is at most $d$.  

\subsection{Construction of $\p$ and of the Taylor polynomial for $\h$}$\;$
For each $x\in X$ and map $\f_x : \E_{x} \to \E_{fx}$ we consider its 
Taylor polynomial  of degree $N$ at $t=0$: 
\begin{equation}\label{f_x}
\f_x(t) \sim \sum_{n=1}^N \fd^{(n)}_x(t).
\end{equation}
As a function of $t$, $\fd^{(n)}_x(t) :\E_{x} \to \E_{fx}$ is a  homogeneous polynomial 
map of degree $n$.  We will view the family $F ^{(n)}= \{ F^{(n)}_x\}_{x\in X}$ as a section of the
corresponding bundle of homogeneous polynomials.
We will use similar notations for the Taylor polynomials at $t=0$ of the desired coordinate 
change $\h_x(t)$ and the corresponding sub-resonance polynomial extension $\p_x(t)$:
$$\h_x(t)\sim \sum_{n=1}^N \hd^{(n)}_x(t)\quad \text{and} \quad 
 \p_x(t)= \sum_{n=1}^d \pd^{(n)}_x(t).
 $$
We will inductively construct the Taylor terms $H ^{(n)}= \{ H^{(n)}_x\}_{x\in X}$ and 
$P ^{(n)}= \{ P^{(n)}_x\}_{x\in X}$ as continuous sections of the
corresponding bundle of homogeneous polynomials.

For the first derivative we define 
$$\hd^{(1)}_x= \Id : \E_x \to \E_x \quad \text{and} \quad 
\pd^{(1)} _x =\fd _x  \quad \text{for all } x \in X.
$$
Now we assume that the terms of order less than $n$ are constructed.
Using these linear terms in the conjugacy equation 
$ \h_{fx}  \circ \f_{x} =\p_{x} \circ \h_{x} $ we write
$$
\left( \Id   +\sum_{i=2}^N  H^{(i)} _{fx}  \right) \circ
\left( F_x+\sum_{i=2}^N F^{(i)} _{x} \right)  
\sim \left( F_{x} + \sum_{i=2}^d P^{(i)}_{x} \right) \circ
\left( \Id +\sum_{i=2}^N H^{(i)}_{x} \right).
$$
and considering the terms of degree $n$, $2 \le n \le N$,  we obtain  
$$
F^{(n)}_{x}\, + \, H^{(n)}_{fx} \circ F(x)\,+ \,
\sum H^{(i)}_{fx} \circ F^{(j)}_{x}
\,= \,F_{x} \circ H^{(n)}_{x}+ P^{(n)}_{x}+\,
\sum P^{(j)}_{x} \circ H^{(i)}_{x},
$$
where the summations are over all $i$ and $ j$ such that $i j=n$ and $ 1<i,j<n$.
We rewrite the equation as 
\begin{equation}\label{Pn}
F_{x}^{-1} \circ P^{(n)}_{x} =  - H^{(n)}_{x}  + F_{x}^{-1} \circ H^{(n)}_{fx} \circ F_{x} + Q_{x}, \;
\end{equation}
where
\begin{equation}\label{Q}
Q_{x}= F_{x}^{-1} \left( F^{(n)}_{x} + 
\sum_{ij=n, \;\, 1<i,j<n} H^{(i)}_{fx} \circ F^{(j)}_{x}  
-  P^{(j)}_{x} \circ H^{(i)}_{x} \right).
\end{equation}
We note that $Q_x$ is composed only of terms $H^{(i)}$ and  $P^{(i)}$ with $1<i<n$, 
which are already constructed, and terms $F^{(i)}$ with $1<i\le n$,
which are given. Thus by the inductive assumption $Q_x$ is defined for all
 $x \in X$ and is continuous in $x$. 
 
Let $\po_x^{(n)}$ be  the space of all homogeneous polynomial maps on  $\E_x$ of 
degree $n$, and  let $\s_x^{(n)}$ and $\n_x^{(n)}$ be the subspaces of 
sub-resonance and non sub-resonance  polynomials  respectively.
We seek $H^{(n)}_{x}$ so that the right side of \eqref{Pn}
 is in $\s_x^{(n)}$, and hence so is $P^{(n)}_{x}$ when defined by this equation.

Projecting  \eqref{Pn} to the factor bundle $\po^{(n)} / \s^{(n)}$, our goal is to solve the  equation
 \begin{equation}\label{barHn}
0 =  - \bar H^{(n)}_{x}  + F_{x}^{-1} \circ \bar H^{(n)}_{fx} \circ F_{x} + \bar Q_{x}, \;
\end{equation}
where $\bar H^{(n)}$ and $ \bar Q$ are the projections of $H^{(n)}$ and $Q$
respectively.

We consider the bundle automorphism $\Phi : \po^{(n)} \to \po^{(n)}$ covering 
$f^{-1}: \M \to \M$ given by
the maps $\Phi_x : \po^{(n)} _{fx} \to \po^{(n)} _{x}$
 \begin{equation}\label{Phi}
\Phi_x (R)= \fd _{x}^{-1} \circ R  \circ \fd _{x}.
 \end{equation}
Since $\fd$ preserves the splitting $\E=\E^1\oplus \dots \oplus \E^{\ell}$, 
it follows from the definition that the sub-bundles $\s^{(n)}$ and $\n^{(n)}$ are $\Phi $-invariant. 
We denote by $\bar \Phi$ the induced automorphism of $\po^{(n)} / \s^{(n)}$
and conclude that \eqref{barHn} is equivalent to 
 \begin{equation}\label{fixedP}
 \bar H^{(n)}_{x} =  \tilde \Phi_x ( \bar H^{(n)}_{fx} ), 
 \quad \text{where }\; \tilde \Phi_x (R)= \bar \Phi_x (R) +  \bar Q_{x}.
\end{equation}

Thus a solution of \eqref{barHn} is a $\tilde \Phi$-invariant section of $\po^{(n)} / \s^{(n)}$. In Lemma \ref{contraction} below we will show that $\tilde \Phi$ is a contraction and hence has a unique continuous invariant section, which can be explicitly written as
\begin{equation}\label{bar s"}
\bar H _x  = \sum _{k=0}^\infty (F^k_{x})^{-1} \circ \bar Q_{f^k x} \circ  F^k_{x},
\quad \text{where }\,F^k_{x}= F_{f^{k-1}x}\circ \dots \circ F_{fx} \circ F_{x}.
\end{equation}

We equipped $\E$ with a continuous Riemannian metric as in Remark \ref{assSp} 
for which the splitting $\E=\E^{1} \oplus \dots \oplus \E^{l}$ is orthogonal and so that
 \eqref{estAEi'} and hence \eqref{estAnorm} hold.

The norm of a homogeneous polynomial map $R: \E_x \to  \E_y$ of degree $n$ is defined as 
\begin{equation} \label{normDef}
\|R\|=\sup\{\,\|R(v)\|:\; v\in E, \;  \|v\|=1 \,\}.
\end{equation}
 It follows that for any other homogeneous 
polynomial map $P: \E_z \to  \E_x$ we have
\begin{equation}\label{normP}
 \|\, R \circ P \,\|\leq \|R\|\cdot \|P\|^n.
\end{equation}

First, we look at the action of $\Phi $ on polynomials of specific homogeneous type.

\begin{lemma} \label{exponents}
Let $Q\in \po^{(n)} _{x} $ and $R \in \po^{(n)} _{fx} $ be polynomials of homogeneous type  
$s=(s_1,  \dots , s_{\ell})$ with $s_1+\cdots +s_\ell=n$. Then 
\begin{equation}
\| \Phi_x (R) \| \le e^{ -\chi_i+\sum s_j \chi_j +(n+1)\e} \, \| R \| \; \, \text{ and } \; \, \| \Phi^{-1}_x (Q) \|  \le   e^{\chi_i - \sum s_j \chi_j + (n+1)\e} \| R \|.
\end{equation}
\end{lemma}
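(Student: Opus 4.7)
The plan is a direct estimate using the scaling behavior of $R$ together with the spectral bounds on $F_x$. For a unit vector $v \in \E_x$, decompose $v = v_1 + \cdots + v_\ell$ along the orthogonal splitting $\E_x = \E^1_x \oplus \cdots \oplus \E^\ell_x$, so each $\|v_j\| \le 1$. Since $F_x$ preserves the splitting, $w_j := F_x(v_j) \in \E^j_{fx}$ satisfies $\|w_j\| \le e^{\chi_j+\e}$ by \eqref{estAEi'}.

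Next I invoke the homogeneous-type relation \eqref{stype}. Writing $w_j = a_j \hat w_j$ with $a_j = \|w_j\|$ and $\hat w_j$ a unit vector in $\E^j_{fx}$ (or $w_j=0$), the scaling property gives
\[
R(F_x v) \;=\; R\bigl(\textstyle\sum_j a_j \hat w_j\bigr) \;=\; \Bigl(\prod_j a_j^{s_j}\Bigr)\, R\bigl(\textstyle\sum_j \hat w_j\bigr),
\]
and since $s_1+\cdots+s_\ell = n$, the product is bounded by $\prod_j e^{s_j(\chi_j+\e)} = e^{\sum s_j \chi_j + n\e}$. The remaining factor $\|R(\sum_j \hat w_j)\|$ is controlled by $\|R\|$ via \eqref{normP}; any purely dimensional constant arising from $\|\sum \hat w_j\| \le \sqrt{\ell}$ is absorbed either by adopting the equivalent adapted norm $\sup\{\|R(\sum u_j)\|: u_j \in \E^j_{fx},\; \|u_j\| \le 1\}$, or by a standard renormalization of the reference frames on the $\E^j$. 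Finally, since $R$ takes values in $\E^i$, the output $R(F_x v) \in \E^i_{fx}$ and $F_x^{-1}$ applied to it contributes a factor of at most $e^{-\chi_i+\e}$ from \eqref{estAnorm}. Multiplying the three estimates yields the claimed bound on $\Phi_x(R)$.

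The second estimate for $\Phi_x^{-1}(Q) = F_x \circ Q \circ F_x^{-1}$ follows by the symmetric argument: $F_x^{-1}$ on the source contributes $\prod_j e^{s_j(-\chi_j+\e)}$ via the same homogeneity calculation applied to $F_x^{-1}(v)$, and $F_x$ on the target contributes $e^{\chi_i+\e}$ because $Q$ lands in $\E^i_x$. The only real obstacle is the exponent bookkeeping and the harmless dimensional constant mentioned above; there is no genuine analytic difficulty, as everything reduces to the spectral estimates \eqref{estAnorm} and the defining scaling property of homogeneous-type polynomials.
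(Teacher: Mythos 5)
Your approach is the right one and closely parallels the paper's, but there is a normalization slip that leaves a genuine gap: you set $a_j = \|F_x(v_j)\|$ and renormalize each $w_j$ to a \emph{unit} vector $\hat w_j$. Since the $\hat w_j$ lie in orthogonal subspaces, $\|\sum_j \hat w_j\|$ can be as large as $\sqrt{\ell}$ (independently of how small the original $v_j$ were), so $\|R(\sum_j\hat w_j)\|\le \ell^{n/2}\|R\|$ and you pick up a factor $\ell^{n/2}$ that the lemma does not have. Your two proposed fixes do not close this cleanly: there is no ``renormalization of reference frames'' that changes the norm of a sum of orthonormal vectors, and switching to the adapted norm $\sup\{\|R(\sum u_j)\|:\|u_j\|\le 1\}$ is not merely ``absorbing a constant'' --- it proves a different inequality (for a strictly larger, though equivalent, norm) than the one stated. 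For the downstream application in Lemma~\ref{contraction} a multiplicative constant would indeed still give an eventual contraction, so the global argument would survive, but the lemma itself, as stated, is not established by your computation.

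The fix is exactly what the paper does, and is a one-line change: take $a_j=\|F|_{\E^j_x}\|$, the \emph{operator norm} on the $j$-th block, rather than the length of the particular image vector. Then $F_x(v_j)=a_j v_j'$ with $\|v_j'\|\le\|v_j\|$, so by orthogonality $\|\sum_j v_j'\|\le\|\sum_j v_j\|=\|v\|=1$, and the scaling identity \eqref{stype} directly gives
\begin{equation*}
\|(R\circ F_x)(v)\|=a_1^{s_1}\cdots a_\ell^{s_\ell}\,\|R(v')\|\le a_1^{s_1}\cdots a_\ell^{s_\ell}\,\|R\|
\end{equation*}
with no dimensional constant. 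Combined with $a_j\le e^{\chi_j+\e}$ and $\|F|_{\E^i_x}^{-1}\|\le e^{-\chi_i+\e}$ this yields the stated bound exactly; the argument for $\Phi_x^{-1}$ is symmetric as you say. So the key point you were missing is: scale by the common block operator norm (uniform over the block) so that the rescaled vector stays inside the unit ball, rather than renormalizing each component to unit length.
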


\begin{proof}
We will prove the first inequality, the second one is obtained similarly.
Suppose that $v=v_1+ \dots +v_{\ell}$, where $v_j \in \E^j_x$, and $\|v\|_x=1$. 
We denote $a_j=\| F|_{\E^j_x}\|$ and observe that
$F_x(v_j)= a_j v_j'  \in \E^j_{fx}$ with $\|v_j'\|\le \|v_j\|$.
Since $R$ has homogeneous type $s= (s_1, \dots , s_\ell)$ we obtain 
by \eqref{stype} that
\begin{equation}
(R \circ F_x)(v)=R(a_1 v_1'+ \dots + a_\ell v_\ell ')= 
a_1^{s_1} \cdots  a_\ell^{s_\ell} \,\cdot R(v_1'+ \dots + v_\ell ').
\end{equation}
where $v'=v_1'+ \dots + v_\ell '$ has $\|v'\|_{fx}\le \|v \|_x=1$
by orthogonality of the splitting.
Thus 
$$
\| (R \circ F_x)(v)\|= a_1^{s_1} \cdots  a_\ell^{s_\ell} \cdot
\| R(v')\| \le a_1^{s_1} \cdots  a_\ell^{s_\ell} \,\cdot
\| R\|
$$
for any $v \in \E_x$ with $\|v\|_x=1$, so  by definition  \eqref{normDef} we obtain
$\| R \circ F_x\|\le a_1^{s_1} \cdots  a_\ell^{s_\ell} \,
\cdot \| R\| $.
Now \eqref{normP} yields
$$
\begin{aligned}
&\| \Phi_x (R) \| =\|F|_{\E^i_x}^{-1} \circ R \circ F_x\| \le \|F|_{\E^i_x}^{-1}\| \cdot 
\| R \circ F_x\|\le \\
&\le \|F|_{\E^i_x}^{-1}\| \cdot a_1^{s_1} \cdots  a_\ell^{s_\ell} \cdot
\| R\| \le e^{-\chi_i +\e}  \cdot \prod_j (e^{\chi_j +\e})^{s_j} 
 \cdot  \| R \| .
 \end{aligned}
$$ 
Since $a_j=\| F|_{\E^j_x}\|\le e^{\chi_j +\e}$ and 
$\|F|_{\E^i_x}^{-1}\| \le e^{-\chi_i +\e}$ by \eqref{estAnorm}.
\end{proof}

\begin{lemma} \label{contraction}
The map $\Phi : \n^{(n)} \to \n^{(n)}$ given by \eqref{Phi} is a  contraction over $f^{-1}$, and hence so is $\,\tilde \Phi : \po^{(n)} / \s^{(n)} \to \po^{(n)} / \s^{(n)}$  given by \eqref{fixedP}. More precisely,  
$\| \Phi_x (R) \| \le e^{\la +(d+2)\e}\cdot \| R \| $.
\end{lemma}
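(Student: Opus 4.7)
The plan is to reduce the general estimate to the per-component estimate already proved in Lemma \ref{exponents}. First I would decompose $\po^{(n)}$ into a direct sum of sub-bundles $\po^{(n),s,i}$ indexed by homogeneous types $s = (s_1, \dots, s_\ell)$ with $\sum_j s_j = n$ and target components $i \in \{1, \dots, \ell\}$. Since $F$ preserves the splitting $\E = \bigoplus \E^j$, the bundle map $\Phi$ preserves this decomposition, and $\n^{(n)}$ is exactly the direct sum of those $\po^{(n),s,i}$ for which $(s,i)$ is non sub-resonance, i.e.\ $-\chi_i + \sum_j s_j \chi_j < 0$.

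The core estimate then amounts to checking that for every non sub-resonance pair $(s,i)$ with $\sum_j s_j = n$,
\[
-\chi_i + \sum_{j} s_j \chi_j + (n+1)\e \;\le\; \la + (d+2)\e.
\]
I would split this into two cases. For $2 \le n \le d+1$, the non sub-resonance condition and the definition of $\tilde\la$ in \eqref{lambda} give $-\chi_i + \sum_j s_j \chi_j \le \tilde\la \le \la$, while $(n+1)\e \le (d+2)\e$ is immediate. For $n \ge d+2$, I would instead use the crude bound $-\chi_i + \sum_j s_j \chi_j \le -\chi_1 + n\chi_\ell$ together with $\la \ge -\chi_1 + (d+1)\chi_\ell$; the desired inequality reduces to $(n-d-1)(\chi_\ell + \e) \le 0$, which holds because $\e < \e_0 \le -\chi_\ell$ forces $\chi_\ell + \e < 0$. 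Combined with Lemma \ref{exponents}, this yields $\|\Phi_x(R_{s,i})\| \le e^{\la + (d+2)\e}\|R_{s,i}\|$ on each homogeneous summand.

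From the componentwise bound, I would obtain $\|\Phi_x(R)\| \le e^{\la + (d+2)\e}\|R\|$ on all of $\n^{(n)}$ by summing over the finitely many homogeneous types, if necessary after replacing the operator norm by the equivalent maximum-of-components norm, which does not affect the exponential rate. The induced map $\bar\Phi$ on the quotient bundle $\po^{(n)}/\s^{(n)}$ then inherits the same rate via the isomorphism $\po^{(n)}/\s^{(n)} \cong \n^{(n)}$ coming from the $\Phi$-invariant splitting $\po^{(n)} = \s^{(n)} \oplus \n^{(n)}$. Finally, since $\tilde\Phi_x(R) = \bar\Phi_x(R) + \bar Q_x$ differs from $\bar\Phi$ only by the (fixed) translation $\bar Q$, the Lipschitz constant of $\tilde\Phi$ on the Banach space of continuous sections equals that of $\bar\Phi$, which is $e^{\la + (d+2)\e} < 1$ since $\e < \e_0 \le -\la/(d+2)$; hence $\tilde\Phi$ is a contraction, as required for the fixed point argument in \eqref{bar s"}.

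The main obstacle I expect is not the case analysis itself, which is essentially bookkeeping with the definitions of $\la$ and $\e_0$, but rather organizing the slack so the $(d+2)\e$ absorbs both the $(n+1)\e$ coming from Lemma \ref{exponents} and any implicit constants arising from comparing the operator norm on $\po^{(n)}$ with the norms on the individual summands $\po^{(n),s,i}$. Once the choice of (equivalent) norm is pinned down, the contraction rate falls out uniformly in $n$ and $x$.
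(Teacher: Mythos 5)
Your proposal is correct and takes essentially the same approach as the paper: bound the per-type contraction rate via Lemma~\ref{exponents}, split into a low-degree case handled by the definition of $\tilde\la$ and a high-degree case handled by the crude bound $-\chi_1 + n\chi_\ell$, and pass to the quotient/affine map $\tilde\Phi$ at the end. The only (immaterial) deviations are that you place the boundary case $n=d+1$ in the first branch rather than the second, and that you make explicit the small point the paper leaves tacit — namely that the single-type estimate is assembled into a bound on general $R\in\n^{(n)}$ via a norm (such as the max-of-components norm on the type decomposition) in which $\Phi$ acts block-diagonally.
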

\begin{proof}
The statement about $\,\tilde \Phi$ follows since the linear part $\bar \Phi$ of $\tilde \Phi$ 
is given by $\Phi$ when $\po^{(n)} / \s^{(n)}$ is naturally identified with $\n^{(n)}$.

For all non sub-resonance homogeneous types we have $-\chi_i+\sum s_j \chi_j  \le \tilde \la $ by the
definition of $\tilde \lambda$ \eqref{lambda} and hence for any $R \in \n^{(n)} _{fx}$ Lemma \ref{exponents} 
yields the estimate $\| \Phi_x (R) \| \le e^{\tilde \la +(n+1)\e}\cdot \| R \| $. For all $n \le d$ the exponent satisfies 
$\tilde \la +(n+1)\e \le  \la +(d+1)\e <0$ since $\e<\e_0 \,$ given by \eqref{epsilon}.

If $d+1\le n\le N$, then $ \s^{(n)}=0$ and $\po^{(n)} =\n^{(n)}$. In this case for any $R \in \po^{(n)} _{fx}$ 
we can estimate $\| \Phi_x (R) \| \le e^ {-\chi_1+n \chi_\ell +(n+1)\e}\cdot \| R \|$ and by the
definition of $\lambda$ \eqref{lambda} we have $-\chi_1+(d+1) \chi_\ell \le \lambda<0$ and hence
the exponent satisfies 
$$-\chi_1+n \chi_\ell +(n+1)\e \le  \la +(d+2)\e +(n-(d+1))(\e + \chi_\ell) <0$$
 since $\e<\e_0 \,$ given by \eqref{epsilon}.
\end{proof}

\vskip.1cm

We conclude that the unique continuous invariant section \eqref{bar s"} for $\tilde \Phi$ is the unique continuous solution $\bar H^{(n)}$ of \eqref{barHn}. Now can we choose a continuous section 
$H^{(n)}$ of $\po^{(n)}$ which projects to $\bar H^{(n)}$. Such $H^{(n)}$ is defined uniquely 
up to a continuous section $\s^{(n)}$. 
\begin{remark} \label{smooth dep}
For example one can take $H^{(n)}$ in $\n^{(n)}$. However, in the foliation setting the bundles 
$\po^{(n)}$, $\s^{(n)}$, their quotient, and $\bar H^{(n)}$ are often more regular along the leaves 
than $\n^{(n)}$. In this case one can make a more regular choice for $H^{(n)}$ leading to better
dependence of $\h_x$ on $x$ along the leaves. See \cite{KS15} for more details of this argument.
\end{remark}

Once a specific lift $H^{(n)}$ is chosen, $\pd^{(n)}_x$ is uniquely define by equation \eqref{Pn} 
and is a continuous section $\pd^{(n)}$. This completes the inductive step and the construction of
$\hd^{(n)}$ and $\pd^{(n)}$, $n=1, \dots, N$.
\vskip.2cm
Thus we have constructed the $N$-th Taylor polynomials for the coordinate changes
\begin{equation}\label{H^N}
\h_x^N(t)= \sum_{n=1}^N \hd^{(n)}_x(t)\quad\text{of  degree }\;
N\ge d=\lfloor \chi_1/\chi_{\ell} \rfloor
\end{equation}
 and the polynomial maps 
$\p_x(t)= \sum_{n=1}^d \pd^{(n)}_x(t)$.

\subsection{Construction of the coordinate change $\h$.} 

In this section we complete the proof of part (1) by constructing the actual coordinate changes 
$\h_x$ with the Taylor polynomial $\h_x^N$ given by \eqref{H^N}. 
To simplify the calculations we note that $\h^N_x(t)$ is a diffeomorphism on some neighborhood 
$\tilde \V_x$ of $0\in\E_x$ since its differential at $0$ is $\Id$, moreover the size of 
$\tilde \V_x$ can be bounded away from $0$ by compactness of $\M$. Thus we can consider extension 
$\tilde \f_x(t)=\ \h_{fx}^N \circ \f_{x} \circ ( \h_x^N)^{-1}$. By the construction of $ \h^N$, 
the maps $\tilde \f_x$ and $\p_x $ have the same derivatives at $t=0$ 
up to order $N$ for each $x \in \M$. Since the construction is done in a sufficiently small 
neighborhood of zero section, we can replace $\f$ by $\tilde \f_x$, and so henceforth we 
  assume that $\f$ itself has this property.

We rewrite the conjugacy equation 
$\h_{fx} \circ \f_x=\p_x\circ \h_x$ in the form 
\begin{equation} \label{T}
\h_x= \tilde T(\h)_x=\p_x^{-1} \circ \h_{fx} \circ \f_x.
\end{equation}
so that  solution $\h=\{ \h_x \}$ is a fixed point of the operator $\tilde T$. Since $\p_x$ is a 
sub-resonance polynomial with with invertible linear part $ \pd^{(1)}_x=F_x$, by the group property
 the inverse $\p_x^{-1}$ is also a sub-resonance polynomial and thus has degree at most $d\le N$.

Denoting $\bar H_x=\h_x-\Id$
we rewrite \eqref{T} as
\begin{equation} \label{tilde T}
\bar H_x =  T(\bar H)_x=  \p_x^{-1} \circ (\Id+\bar H_{fx}) \circ \f_x - \Id.
\end{equation}
Thus the coordinate change $\h$ corresponds to the fixed point $\bar H= T(\bar H)$
 of the operator $T$ on the space of continuous families of smooth functions. We will show that 
 $T$ is a contraction  on an appropriate space and thus has a unique fixed point. 

For any $x \in X$ we consider the ball $B_{x,r}$ in $\E_x$ centered at $0$ of radius $r$ and denote
  $$
  \c_{x} = \c_{x,r}= \{ R \in \Cr (B_{x,r},\E_x) : \;
  D^{(k)}_0 R =0, \;\, k=0,...,N \},
  $$
where $\Cr (B_{x,r},\E_x)$ and its norm are defined as in \eqref{Crnorm}. We note that for any 
$R\in \c_{x}$ the $\a$-H\"older constant \eqref{Canorm} of $D^{(N)} R$ at $0$ is 
\begin{equation}\label{norm a}
 \|D^{(N)} R\|_{\a}= \sup\, \{ \| D^{(N)}_t R\|\cdot \|t\|^{-\a} : \;\, 0\ne t \in B_{x,r}\}.
\end{equation} 
For any $R\in \c_{x}$  lower derivatives can be estimated by the mean value theorem as 
\begin{equation}\label{deriv'}
 \|D^{(n)}_t R \| \le \|t\|^{N-n} \cdot
 \sup \,\{ \|D^{(N)}_s R \| : \,\| s\| \le \|t\| \} ,
  \end{equation} 
so using the above H\"older constant we obtain that for any  $0 \le n<N$ 
and $t\in B_{x,r}$,
\begin{equation}\label{deriv}
 \|D^{(n)}_t R \|  \le \|t\|^{1+\a}\cdot \|D^{(N)} R\|_{\a}.
\end{equation} 
Thus for $r<1$ the norms of all derivatives are dominated by the H\"older constant and hence
\begin{equation}\label{Cr=a}
  \|R \|_{\Cr (B_{x,r},\E_x)} = \|D^{(N)} R \|_{\a}.
\end{equation} 
It follows that $\c_{x}$ equipped with  the norm $\|D^{(N)} R \|_{\a}$ is a Banach space.
We denote by $\c$ the bundle over $X$ with fibers $\c_x$ and by $\S$ the space of 
sections $\bar R=\{ R_x\}_{x\in X}$ of $\c$ which are bounded in $\Cr $ norm and 
continuous in $C^N$ norm. Then $\S$ is a Banach space with the norm  
$\| \bar R \|_{\c}= \sup_x \| R_x \|_{\a}$.

We consider $T$ as an operator on $\S$. It follows from  the definition of $\c_x$ and 
the coincidence of the derivatives of $\p_x$ and $\f_x$ at 0 that $T(\bar R)$ is in $\S$.
We will show that that, for a sufficiently small $r$, $T$ is a contraction on some ball 
$B_\gamma$ in $\S$.
\vskip.2cm

We will now define the parameters for this argument.
First we note that $0<\e <-\chi_\ell \,$ since $\e<\e_0 \,$ given by \eqref{epsilon}, and that we have
$\nu-(N+1+\a) \e>0$ by assumption \eqref{crit} so we can take $\e'>0$ satisfying
\begin{equation}\label{delta}
\chi_\ell +\e+\e'<0  \quad \text{and } \;  \delta=\nu-(N+1+\a)(\e+\e')>0.
\end{equation}
We also recall that  by \eqref{estAnorm} we have
\begin{equation}\label{FP}
 D_0 \p_{x} = D_0\f_x =F_x,  
 \quad \|F_x\| \le e^{\chi_\ell+\e}, \quad \text{and } \; \| F_x^{-1} \| \le e^{-\chi_1+\e} 
\end{equation}
Now we can choose $\rho<\min\{1,\sigma\}$ sufficiently small so that for all $x \in X$ we have
\begin{equation}\label{Pt''}
\| D_{t} \, \f_{x} \| \le e^{\chi_\ell+\e+\e'}  \quad \text{and } \; \| D_t \, (\p_{x})^{-1}  \| \le e^{-\chi_1+\e+\e'} \;
\text{ for all $t \in B_{x,\rho}$},
\end{equation}
so in particular $\f_x:B_{x,\rho} \to B_{fx,\rho}$ is a contraction. We choose $K$ so that 
\begin{equation}\label{CFP}
  \|\f \|_{C^N (B_{x,\rho})} \le K  \quad \text{and } \; \| (\p_{x})^{-1}\|_{C^N (B_{x,\rho})} \le K \;
\text{ for all $x \in X$}.
\end{equation} 
Since $\delta$ given by \eqref{delta} is positive, we can define $\theta>0$ by
\begin{equation}\label{theta}
1-2\theta = \,e^{-\delta} <1 \quad \text{and let } \;  \gamma=\max \{1,\|T(\bar 0)\|_\S/\theta \},
 \end{equation}
here, as $r$ is not yet defined, we take $r=\rho$ in the definition of the norm $\|T(\bar 0)\|_\S$
(this does not create problems since the norm decreases with $r$).
To show that $T$ is a contraction on the ball $B_\gamma$ in $\S$ centered at $\bar 0$ of radius $\gamma$ 
we will  estimate the norm of its differential by $1-\theta$.  For this we choose $r>0$  satisfying 
\begin{equation}\label{r}
 r<\rho<1, \quad r<\rho /(1+\gamma), \quad r \le  \theta /(c_3(K,N)\, \gamma^N)
\end{equation} 
where constant $c_3(K,N)$ from \eqref{Jest} depends only on $N$ and $K$.
\vskip.2cm

Now we will calculate the differential of $T$ on $B_\gamma$ and estimate its norm.
For any $\bar R, \bar S \in B_\gamma$ we can write
$$
(T(\bar R+\bar S)-T( \bar R))_{x} =   (\p_{x})^{-1} \circ (\Id+R_{fx}+S_{fx}) \circ  \f_{x}
- (\p_{x})^{-1} \circ (\Id+R_{fx}) \circ  \f_{x}.
$$
Differentiating $(\p_{x})^{-1}$ and denoting 
$$
y(t)=(\Id+R_{fx})( \f_{x}(t)) =\f_{x}(t)+R_{fx}( \f_{x}(t)) \quad\text{and}\quad
z(t) = S_{fx}( \f_{x}(t))
$$ 
we obtain
$$
(T(\bar R+\bar S)-T(\bar R))_x(t)=   D_{y(t)} (\p_{x})^{-1} \, z(t)+ E(z(t)),
$$
where $E$ is a polynomial with terms of degree at least two. It follows 
that $\|E(z(t))\|_{\c} =O (\|\bar S\|^2_{\c})$ and so the differential of $T$ is given by
$$
([D_{\bar R} T] \bar S) _x(t)= D_{y(t)}(\p_{x})^{-1} \; S_{fx} ( \f_{x}(t))= A_x(y(t))  z(t),
$$
where $A_x(s)= D_{s} (\p_{x})^{-1}$.
To estimate the norm we consider the derivative of order $N$.

Since  $A_x(y(t))$ is a linear operator on $z$, the product rule yields
 \begin{equation} \label{DTN}
D^{(N)} [A_x(y(t))  z(t)]= A_x(y(t)) D^{(N)} z(t) + \sum c_{m,l}\, D^{(m)}A_x(y(t))  D^{(l)} z(t),
\end{equation}
where $m+l=N$ and $l<N$ for all terms in the sum. Differentiating $z(t)$ we get
$$
D^{(l)} z(t) = D^{(l)}  S_{fx}( \f_{x}(t))=  \sum D_{t'}^{(i)} S_{fx} \circ D_t^{(j)} \f_{x_k}, 
$$
where $ij=l$ and $t'= \f_{x}(t)$. Only the first term in \eqref{DTN} contains $D^{(N)} S_{fx}$ so 
 \begin{equation} \label{est}
D_t^{(N)} ([D_{\bar R} T] \bar S) _x = D_{y(t)}^{(1)} \, (\p_{x_k})^{-1} \circ  D_{t'}^{(N)} S_{fx} \circ D_t^{(1)} \f_{x} \,+ \,J_x,
\end{equation}
where  $J_x$ consists of  a fixed number $c(N)$ of terms of the type 
$$
   D_{t}^{(m)} A_x(y(t))  \left(D_{t'}^{(i)} S_{fx} \circ D_t^{(j)} \f_{x} \right), 
   \quad  i<N , \; m+ij=N,
$$
whose norms  can be estimated by
 \begin{equation} \label{J}
 \| A_x(y(t)) \|_{C^N} \cdot \| D_{t'}^{(i)} S_{fx} \|\cdot \| \f_{x}\|^{N-1}_{{C^N}}.
\end{equation}
We start with the middle term and observe that  
 \begin{equation} \label{t'}
\|t'\|=\|\f_{x}(t))\| \le e^{\chi_\ell +\e+\e'} \|t\|<  \|t\| \le r
\end{equation}
 by \eqref{Pt''}. Since $i<N$, we can estimate 
the middle term using \eqref{deriv} 
\begin{equation} \label{middle}
 \|  D_{t'}^{(i)} S_{fx}\| \le 
  \|t'\|^{1+\a} \cdot \| D^{(N)} S_{fx} \|_{\a} 
 < \|t\|^{1+\a} \cdot \|\bar S \|_{\c} \le r \|t\|^{\a} \cdot \|\bar S \|_{\c}.
\end{equation}
For the first term we note that $t'' =y(t)\in B_{x,\rho}$, indeed since $\| \bar R \|_{\c} \le \gamma$ 
we get
 \begin{equation} \label{t''}
\|t'' \|=\| y(t)\|=\|(\Id+R_{fx})( \f_{x}(t))\|=\|t' + R_{fx}(t')\| < (r+ \gamma r) <\rho
\end{equation}
 by \eqref{t'} and the choice of $r$ \eqref{r}. We also use an estimate for the norm of composition 
of smooth maps, see e.g  \cite{dlLO}:
\begin{lemma}  \cite[Theorem 4.3(ii.3)]{dlLO} \label{LL}
For any $N\ge 1$ there exist a constant $M_N$ such that
$\| h\circ g\|_{C^N}  \le M_N \,  \|h\|_{C^N} (1+\| g\|_{C^N})^N$
 for any $h,g \in C^N(B_{x,\rho})$.
\end{lemma}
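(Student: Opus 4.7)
The plan is to apply the Fa\`a di Bruno formula for the $n$-th derivative of a composition, $1\le n\le N$, and estimate the finitely many resulting terms. Recall that for each such $n$,
$$
D^{(n)}_t(h\circ g)=\sum_\pi c(\pi)\, D^{(|\pi|)}_{g(t)}h\,\circ\, \bigotimes_{B\in\pi} D^{(|B|)}_t g,
$$
where $\pi$ ranges over set partitions of $\{1,\dots,n\}$, $|\pi|$ is the number of blocks, $|B|$ the cardinality of a block, and the $c(\pi)$ are universal combinatorial constants depending only on $n$. This is essentially the only nontrivial identity I would invoke; its multivariable form writes each summand as a composition of a derivative of $h$ at $g(t)$ with a tensor product of derivatives of $g$ at $t$.

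Since every block size $|B|$ and the number of blocks $|\pi|$ is at most $n\le N$, each factor in the displayed formula is controlled by the $C^N$ norms of $h$ and $g$: namely $\|D^{(|\pi|)}_{g(t)}h\|\le \|h\|_{C^N}$ and $\prod_{B\in\pi}\|D^{(|B|)}_t g\|\le \|g\|_{C^N}^{|\pi|}$, using submultiplicativity of the operator norm on the tensor product. Combined with the trivial inequality $\|g\|_{C^N}^k\le (1+\|g\|_{C^N})^N$ valid for $0\le k\le N$, each term is dominated by $c(\pi)\,\|h\|_{C^N}(1+\|g\|_{C^N})^N$.

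Summing the finitely many partitions for $n=1,\dots,N$, and appending the trivial $n=0$ bound $\|h\circ g\|_0\le\|h\|_{C^0}\le\|h\|_{C^N}$, one obtains the asserted estimate with $M_N:=1+\sum_{n=1}^{N}\sum_{\pi\vdash\{1,\dots,n\}} c(\pi)$, a constant depending only on $N$.

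The main obstacle, such as it is, is purely notational: keeping careful track of the multivariable/bundle-valued form of Fa\`a di Bruno and verifying that none of the $c(\pi)$ depend on $h$ or $g$. An equally workable alternative, which sidesteps Fa\`a di Bruno entirely, is induction on $N$: for $N=1$ the chain rule gives the bound immediately, and the inductive step differentiates the identity $D(h\circ g)=(Dh)(g)\cdot Dg$ a further $N-1$ times via Leibniz, applying the inductive hypothesis to the compositions of the higher derivatives $D^j h$ with $g$. This produces a constant of the same multiplicative form $M_N\|h\|_{C^N}(1+\|g\|_{C^N})^N$ at the cost of slightly messier bookkeeping.
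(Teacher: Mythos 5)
Your argument is correct, but note that the paper does not actually prove this lemma---it imports it wholesale from de la Llave--Obaya \cite[Theorem 4.3(ii.3)]{dlLO}. So there is no ``paper's own proof'' to compare against; you are supplying a self-contained justification where the paper supplies a reference. Your Fa\`a di Bruno route is the natural one: each term in the expansion of $D^{(n)}(h\circ g)$, $1\le n\le N$, contains exactly one derivative of $h$ (of order $|\pi|\le n\le N$, hence bounded by $\|h\|_{C^N}$) tensored with a product of $|\pi|$ derivatives of $g$ (each of order $\le n\le N$, hence bounded by $\|g\|_{C^N}^{|\pi|}\le(1+\|g\|_{C^N})^N$), and the combinatorial constants $c(\pi)$ depend only on $n\le N$; summing and adding the $n=0$ bound $\|h\circ g\|_0\le\|h\|_0$ gives the claimed $M_N$. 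The only thing worth flagging explicitly is the tacit hypothesis, shared by the lemma statement itself, that $g$ maps $B_{x,\rho}$ into the domain of $h$---in the paper's application this is guaranteed by the surrounding estimates (e.g.\ \eqref{t''}), and it is needed both for the composition to be defined and for the bound $\|D^{(k)}_{g(t)}h\|\le\|h\|_{C^N}$. Your alternative inductive argument via the chain rule and Leibniz differentiation is an equally valid and perhaps more elementary route to the same multiplicative constant structure; it avoids the bookkeeping of the multivariable Fa\`a di Bruno formula at the cost of a slightly less explicit constant, and it is closer in spirit to how \cite{dlLO} proceeds.
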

Using this together with  \eqref{CFP}, $\| \bar R \|_{\c} \le \gamma$, and $\gamma \ge 1$ and we can write
$$  
 \| y(t)\| _{C^N} = \| (\Id+R_{fx}) \circ \f_{x} \| _{C^N} \, \le  M_N \, (1+\gamma)\, (1+K)^N \le c_1(K,N)\gamma ,
  $$
where $c_1(K,N)=2M_N (1+K)^N$.
Now we can estimate the first term in \eqref{J}   
$$
 \| A_x(y(t)) \|_{C^N} \, \le \, M_N \, \| (\p_{x})^{-1} \|_{C^N}  \| y(t) \| _{C^N}^N \,
 \le M_N \, K [1+c_1(K,N)\gamma ]^N \le c_2(K,N)\gamma^N,
$$
where $c_2(K,N)=M_N \, K (1+c_1(K,N))^N$, and we note that $\p_{x}^{-1}$ is a polynomial of degree at most $d\le N$.  Combining this with \eqref{middle} and 
$K^{N-1}$ estimate from  \eqref{CFP} for the last term in \eqref{J}, we obtain the following 
estimate for the norm of $J_x$ in \eqref{est}:
\begin{equation}\label{Jest}
\| J_x\| \le c(N) c_2(K,N)\gamma^N \cdot  r \, \|t\|^{\a}  \|\bar S \|_{\c} \cdot K^{N-1}
\le  c_3(K,N)\, \gamma^N \cdot r \, \|t\|^{\a} \cdot \|\bar S \|_{\c}.
\end{equation}
where $c_3(K,N)=c(N) c_2(K,N) K^{N-1}$ and $c(N)$ is an estimate on the number of terms in $J_x$.


\vskip.1cm

Now we estimate  the main term in \eqref{est} using  \eqref{norm a},  \eqref{Pt''},  \eqref{t''},  and \eqref{t'}:
\begin{equation} \label{main term}
\begin{aligned}
&\| D_{t''}^{(1)} \, (\p_{x})^{-1} \circ D_{t'}^{(N)} S_{fx} \circ D_t^{(1)} \f_{x} \| \,\le \\
&\le 
  \| D_{t''}^{(1)} (\p_{x})^{-1} \| \cdot \|D^{(N)} S_{fx} \|_{\a}\, \|t'\| ^\a \cdot \|  D_t^{(1)} \f_{x} \| ^N \,\le \\
 & \le  e^ {-\chi_1+\e+\e' } \cdot \|\bar S \|_{\c}  \cdot e^{\a (\chi_\ell+\e+\e')}
 \|t\|^\a   \cdot e^{N (\chi_\ell+\e+\e')}  =  e^{-\delta} \|t\|^\a\, \| \bar S \|_{\c} ,
 \end{aligned}
\end{equation}
by the definition of  $\delta$  \eqref{delta} where $\nu=-(N+\a)\chi_\ell+\chi_1$.
By  \eqref{theta} we have $ e^{-\delta} =1-2\theta$.
Finally we estimate \eqref{est} combining \eqref{Jest} and \eqref{main term}.  
For any $\bar R \in B_\gamma$ 
$$
\|D_t^{(N)} ([D_{\bar R} T] \bar S) _x  \| \le \,
 \|t\|^\a \cdot \| \bar S \|_{\c} 
 \left( 1-2\theta +  c_3(K,N)\, \gamma^N \cdot r  \right) \le \,
 \|t\|^\a \cdot \| \bar S \|_{\c} 
 \left( 1-\theta \right).
$$
since $r \le  \theta /(c_3(K,N)\, \gamma^N)$ by \eqref{r}.
Then for all $\bar R \in B_\gamma$ we obtain
$$
\begin{aligned}
& \|\,D_t^{(N)} ([D_{\bar R} T] \bar S) _x  \,\| \le \,
 \|t\|^\a \cdot \| \bar S \|_{\c} 
 \left( 1-\theta  \right), \quad\text{hence}\\
&\|\,D^{(N)} ([D_{\bar R} T] \bar S) _x  
 \,\|_{\a} \le \,  (1-\theta) \cdot  \| \bar S \|_{\c} ,
\quad \text{and so}\\
&\|\,[D_{\bar R} T] \bar S \,\|_{\c} =  \,
\sup_x \,  \,\|\,D^{(N)}  (T(\bar S))_x \,\|_{\a} 
\le \, (1-\theta) \cdot \| \bar S \|_{\c} .
\end{aligned}
$$
Thus $\|D_{\bar R} T \|\le 1-\theta$ for all $\bar R \in B_\gamma$. 
Since $\| T(\bar 0)\|_{\c} \le  \theta \gamma$ from the definition of $\gamma$ \eqref{theta},
the operator $T$ is a contraction from $B_\gamma$ to itself. Thus $T$ has a unique fixed point 
$\bar H \in B_\gamma$, which is section of $\c$ bounded in $\Cr $ norm and continuous in $C^N$ 
norm. The corresponding family of $\Cr$ maps $ \h_x=\Id+H_x$
satisfies \eqref{T}, i.e. conjugates $\p_x$ and $\f_x$. Then the maps  $\h_x$ defined on $B_{x,r}$ 
can be uniquely extended to $\Cr$ diffeomorphisms on $B_{x,\rho}$, and then on $B_{x,\sigma}$, 
by the invariance
 $$
\h_x (t)= (\p^k_x)^{-1} \circ \h_{f^kx} \circ \f^k_x (t)
$$
since for each $t \in B_{x,\sigma}$ we have $\f^k_x (t) \in B_{x,r}$ for  some $k$.


\subsection {Prove of part (2): the (non)uniqueness of $\h$ and $\p$}
This essentially follows from the ``uniqueness" of the construction.
First, starting with $\h_1 =\tilde \h$ we inductively construct coordinate changes $\h_k=\{ \h_{k,x}\}$ 
for $k=1,..., N$ so that the corresponding normal form  $\p_{k ,x}$ is of sub-resonance type and their 
Taylor polynomials $\h_{k,x}(t) \sim \sum_{n=1}^N \hd^{(n)}_{k,x}(t)$ coincide with the Taylor polynomial 
of $\h$ to order $k$, that is $\hd^{(n)}_{x}=\hd^{(n)}_{k,x}$ for $n=1,...,k$. For $\h_1 =\tilde \h$ we have 
$\hd^{(1)}_{x}=\hd^{(1)}_{1,x}=\Id$ and $\p_{1 ,x}$ is sub-resonance  by the assumption.

Suppose $\h_{k-1}$, $k\ge 2$, is constructed and has $\hd^{(n)}_{x}=\hd^{(n)}_{k-1,x}$ for $n=1,...,k-1$.
Then $\p$ and $\p_{k-1}$  have the same terms up to order $k-1$. Hence $\hd^{(k)}_{k-1,x}$ and 
$\hd^{(k)}_{x}$ satisfy the same equation \eqref{barHn} when projected to the factor-bundle 
$\po^{(k)} / \s^{(k)}$. Indeed, the $Q$ term defined by \eqref{Q} is composed only of $F^{(i)}$ and
 terms $H^{(i)}$ and  $P^{(i)}$ with $1<i\le k-1$, which are the same for $\h_{k-1}$ and $\h$.
By uniqueness we obtain that 
$$
\hd^{(k)}_{x}=\hd^{(k)}_{k-1,x} + \Delta^{(k)}_x,\,\text{ where }\,
\Delta^{(k)}_x \in \s^{(k)}_x.
$$
Then the coordinate change $\h_{k,x}=(\Id + \Delta^{(k)}_x)\circ \h_{k-1,x}$ has the same Taylor terms 
as $\h$ up to order $k$ and, since the polynomial $\Id + \Delta^{(k)}_x$ is in $\s_x$, $\h_{k}$ 
conjugates $\f$ to a sub-resonance normal form 
$\p_{k,x}= (\Id + \Delta^{(k)}_{fx})\circ\p_{k-1,x} \circ (\Id + \Delta^{(k)}_x)^{-1}$.

Thus in $N$ steps we obtain the coordinate change   
$$
\h_{N,x}= G_x \circ \tilde \h_{x}, \;\text{ where }\,
G_x = (\Id + \Delta^{(N)}_x) \circ \dots \circ (\Id + \Delta^{(2)}_x) \in \s_x,
$$
 which has the same Taylor terms at $0$ as $\h$ up to order $N$.
 In fact,  for $n>d$ we have $\s^{(n)}=0$ and hence $\Delta^{(n)}=0$, so that $\h_{N}=\h_{d}$. 

Finally, we conclude $\h_{d}=\h_{N}=\h$ by the ``moreover part of (2), which follows from
the uniqueness of the fixed point in the last step of the construction.

\subsection{Prove of part (1'): construction of resonance normal form}$\;$

Now we construct a {\em polynomial} coordinate change $\h$ that brings the sub-resonance normal form
$\p_x= \sum_{n=1}^d \pd^{(n)}_x$ to a resonance normal form $\tp_x= \sum_{n=1}^d \tpd^{(n)}_x$.

We will inductively construct the terms of polynomial coordinate changes $\h_x =\sum_{n=1}^d \hd^{(n)}_x$. 
All terms will be in the group $\s$ of sub-resonance polynomials, so the process will stop in $d$ steps 
and yield sub-resonance of polynomial diffeomorphisms $\h_x$ and resonance normal form $\tp_x$.
The base case is $n=1$, where we take $\hd^{(1)}_x=\Id$, which leaves $\tpd^{(1)}_x= \pd^{(1)}_x=F_x \,$ in the resonance form, i.e. block triangular. 

Now we assume inductively that the terms of degree $k<n$ are constructed so that $\hd^{(k)}$ 
is a continuous section of  $\s^{(k)}$ and  $\tpd^{(k)}_x$  is a resonance polynomial and is continuous in $x$.  
As before, we consider the terms of degree $n$ in the conjugacy equation 
$ \h_{fx}  \circ \p_{x} =\tp_{x} \circ \h_{x} $ 
$$
\pd^{(n)}_{x}\, + \, H^{(n)}_{fx} \circ F_x\,+ \,
\sum H^{(i)}_{fx} \circ P^{(j)}_{x}
\,= \,F_{x} \circ H^{(n)}_{x}+ \tpd^{(n)}_{x}+\,
\sum \tpd^{(j)}_{x} \circ H^{(i)}_{x},
$$
where the summations are over all $i$ and $ j$ such that $i j=n$ and $ 1<i,j<n$. Then 
\begin{equation}\label{tPn}
 \tpd^{(n)}_{x} \circ F_{x}^{-1} =  - F_{x} \circ H^{(n)}_{x} \circ F_{x}^{-1}  +  H^{(n)}_{fx}  + Q_{x}, \;
  \text{ where}
\end{equation}
$$
Q_{x}=  \left( \pd^{(n)}_{x} + 
\sum_{ij=n, \;\, 1<i,j<n} H^{(i)}_{fx} \circ P^{(j)}_{x}  
-  \tpd^{(j)}_{x} \circ H^{(i)}_{x} \right) F_{x}^{-1}.
$$
We note that $Q_x$ is composed only of terms $H^{(i)}$ and  $\tpd^{(i)}$ with $1<i<n$, 
which are already constructed, and terms $P^{(i)}$ with $1<i\le n$, which are given. 
Since composition of sub-resonance polynomials is a sub-resonance polynomial, 
by the inductive assumption $Q_x$ is a continuous section of  $\s^{(n)}$. 
 
We consider splitting $\s_x^{(n)}=\r^{(n)}_x \oplus \ss_x^{(n)}$, where $\r^{(n)}_x$ and $ \ss_x^{(n)}$ 
denote the maps of resonance and strict sub-resonance types respectively.
We seek $H^{(n)}_{x}$ so that the right side of \eqref{tPn}
 is in $\r_x^{(n)}$, and hence so will be $\tpd^{(n)}_{x}$ when defined by this equation.

Projecting  \eqref{tPn} to the factor bundle $\s^{(n)} / \r^{(n)}$ we need to solve the  equation
 \begin{equation}\label{tbarHn}
0 =  - F_{x} \circ \bar H^{(n)}_{x} \circ F_{x}^{-1} +  \bar H^{(n)}_{fx}  + \bar Q_{x}, \;
\end{equation}
where $\bar H^{(n)}$ and $ \bar Q$ are the projections of $H^{(n)}$ and $Q$
respectively. We consider the automorphism $\Phi ^{-1}$ of the bundle $\s^{(n)}$  covering $f$ 
with fiber maps 
 \begin{equation}\label{PhiR}
\Phi_x^{-1} : \s^{(n)} _{x} \to \s^{(n)} _{fx} \quad \text{where }\;  \Phi_x ^{-1}(R)= \fd _{x} \circ R  \circ \fd _{x}^{-1}.
\end{equation}
Since $\fd$ preserves the splitting $\E=\E^1\oplus \dots \oplus \E^{\ell}$, 
the resonance and  strict sub-resonance types are preserved by $\Phi ^{-1}$. 
We denote by $\bar \Phi^{-1}$ the induced automorphism of $\s^{(n)} / \r^{(n)}$
and see that \eqref{tbarHn} becomes 
 \begin{equation}\label{tfixedPR}
 \bar H^{(n)}_{fx} =  \tilde \Phi_x^{-1} ( \bar H^{(n)}_{x} ), 
 \quad \text{where }\; \tilde \Phi_x^{-1} (R)= \bar \Phi^{-1}_x (R) -  \bar Q_{x}.
\end{equation}
Thus a solution of \eqref{tbarHn} is a $\tilde \Phi^{-1}$-invariant section of $\s^{(n)} / \r^{(n)}$. 

\begin{lemma} \label{contractionR}
The map $\Phi ^{-1}: \ss^{(n)} \to \ss^{(n)}$ given by \eqref{PhiR} is a  contraction over $f$, and hence so is $\,\tilde \Phi^{-1} : \s^{(n)} / \r^{(n)} \to \s^{(n)} / \r^{(n)}$  given by \eqref{tfixedPR}. More precisely,  
$\| \Phi_x^{-1} (R) \| \le e^{\la +(d+2)\e}\cdot \| R \| $.
\end{lemma}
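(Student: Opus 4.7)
The plan is to parallel the proof of Lemma~\ref{contraction}, replacing the role of the non sub-resonance constant $\tilde\la$ by the strict sub-resonance constant $\mu$ from \eqref{mu}. First, under the natural identification $\s^{(n)}/\r^{(n)} \cong \ss^{(n)}$, the linear part of $\tilde\Phi^{-1}$ coincides with the restriction of $\Phi^{-1}$ to $\ss^{(n)}$, while the affine translate $-\bar Q_x$ in \eqref{tfixedPR} does not affect the Lipschitz constant. Hence it is enough to show that $\Phi^{-1}|_{\ss^{(n)}}$ is a fiberwise contraction over $f$ with the claimed rate.

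Next I would decompose any $R \in \ss^{(n)}_x$ into its homogeneous-type components. Since $F_x$ respects the splitting $\E = \E^1 \oplus \cdots \oplus \E^\ell$, the subspace of polynomials of each fixed homogeneous type $s = (s_1,\dots,s_\ell)$ is $\Phi^{-1}$-invariant, so it suffices to estimate $\Phi^{-1}_x$ on one strict sub-resonance type at a time. Applying Lemma~\ref{exponents} to such an $R$ of type $s$ and target component $\E^i$ gives $\|\Phi^{-1}_x(R)\| \le e^{\chi_i - \sum s_j\chi_j + (n+1)\e}\|R\|$, and by the definition of $\mu$ in \eqref{mu} every strict sub-resonance type satisfies $\chi_i - \sum s_j\chi_j \le \mu < 0$.

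Finally I would check that the exponent is negative uniformly in $n$. Any strict sub-resonance polynomial has total degree at most $d = \lfloor \chi_1/\chi_\ell \rfloor$ (see \eqref{degree}), so $\ss^{(n)} = 0$ for $n > d$ and only $n \le d$ contributes. For such $n$ the estimate gives $\mu + (n+1)\e \le \mu + (d+1)\e < 0$ from $\e < \e_0 \le -\mu/(d+1)$ in \eqref{epsilon}, which yields the asserted contraction; absorbing constants in the same style as at the end of Lemma~\ref{contraction} gives the bound recorded in the statement. I do not anticipate any genuine obstacle beyond this bookkeeping: the argument is the exact dual, on sub-resonance types of the splitting, of the contraction proof for $\Phi$ on non sub-resonance types, with the role of $\la$ from \eqref{lambda} played by $\mu$ from \eqref{mu}, and with the truncation at degree $d$ removing what was handled by the $-\chi_1 + (d{+}1)\chi_\ell$ term there.
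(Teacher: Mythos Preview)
Your proposal is correct and follows essentially the same argument as the paper: identify $\s^{(n)}/\r^{(n)}$ with $\ss^{(n)}$, apply Lemma~\ref{exponents} to each strict sub-resonance homogeneous type to get the bound $e^{\chi_i-\sum s_j\chi_j+(n+1)\e}$, invoke the definition of $\mu$ in \eqref{mu}, and use $n\le d$ together with $\e<\e_0\le -\mu/(d+1)$ from \eqref{epsilon} to conclude the exponent is negative. The paper's proof is essentially a compressed version of exactly what you wrote; your remark about the $\la+(d+2)\e$ in the stated bound versus the $\mu+(d+1)\e$ you actually obtain is well taken, as the paper's own proof also ends at $\mu+(d+1)\e<0$ (the displayed constant in the statement appears to be carried over from Lemma~\ref{contraction}).
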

\begin{proof}
The statement about $\,\tilde \Phi^{-1}$ follows since the linear part $\bar \Phi^{-1}$ of $\tilde \Phi^{-1}$ 
is given by $\Phi^{-1}$ when $\s^{(n)} / \r^{(n)}$ is naturally identified with $\ss^{(n)}$. 

By Lemma \ref{exponents}, for polynomials of homogeneous type  $s=(s_1,  \dots , s_{\ell})$ with 
$s_1+\cdots +s_\ell=n$ we have $\| \Phi^{-1}_x (R) \|  \le   e^{\chi_i - \sum s_j \chi_j + (n+1)\e} \| R \|$.
For all strict sub-resonance homogeneous types we have $\chi_i - \sum_{j=1}^\ell s_j \chi_j \le \mu $ by the
definition of $\mu$ \eqref{mu} and hence for any $R \in \ss^{(n)} _{x}$ we have 
$\| \Phi_x (R) \| \le e^{\mu +(n+1)\e}\cdot \| R \| $. Since $n \le d$ the exponent satisfies 
$\mu +(n+1)\e \le  \la +(d+1)\e <0$ since $\e<\e_0 \,$ given by \eqref{epsilon}.
\end{proof}

We conclude that $\tilde \Phi^{-1}$ is a contraction and hence has a unique continuous invariant section
$\bar H^{(n)}$. We choose a continuous section $H^{(n)}$ of $ \s^{(n)}$ which projects to $\bar H^{(n)}$, which is defined uniquely up to a section of $\r^{(n)}$.  For example one can take $H^{(n)}$ in $\ss^{(n)}$.
 Once $H^{(n)}$ is chosen, we define $\tpd^{(n)}_x$ by equation \eqref{tPn} and get a continuous section $\tpd^{(n)}$ of  $\r^{(n)}$. This completes the inductive step and the construction of
$\h$ and $\tp$.

\subsection{Prove of part (2'): the (non)uniqueness for resonance normal form}$\;$

This follows from the ``uniqueness" of the construction in the previous section similarly 
to the proof of  part (2). The process of transition from  $\tilde \h'_x$ to  $\h'_x$ stays in the
group of resonance polynomials $\r_x$ and we obtain  
$\h_x=\h_{d,x}= G_x \circ \tilde \h_{x}'$, where $G_x  \in \r_x$. 


\subsection {Proof of part (3): centralizer.}

First we prove that the derivative of $\g$ at zero section, $\G_x=D_0\g_x$, is sub-resonance. 
Since $\G_x$ is linear, this is equivalent to the fact that $\G_x$ preserves the flag of fast sub-bundles
associated with the splitting \eqref{fastflag}.
Suppose to the contrary that for some $x \in X$ and some $i>j$ we have a unit vector $t$ in $\E^j_x$ 
such that $t'=\G_x(t)$ has nonzero component $t'_i \ne 0$ in $\E^i_{gx}$. Then we have
$$
\| (F ^n_{gx}  \circ \G_{x} )(t) \| \ge \| F ^n_{gx} (t'_i) \| \ge e^{(\chi_i -\e)n}\, \| t'_i \|.
$$
On the other hand, since the extensions and hence their derivatives commute, we have 
$$
\| (F ^n_{gx}  \circ \G_{x} )(t) \| = \| \G_{f^n x} ( F ^n_x (t)) \| \le 
\| \G_{f^n x} \| \cdot e^{(\chi_j+\e)n} \| t \| \le C e^{(\chi_j +\e)n},
$$
which is impossible for large $n$ as $\e$ is small enough. Indeed, since $i>j$ we have non sub-re\-sonance relation $-\chi_i + \chi_j  <0$, so by definition \eqref{lambda}
of $\tilde \la$ we have  $-\chi_i + \chi_j \le \tilde \la <0$, and hence by definition 
\eqref{epsilon} of $\e_0$ we have $\e_0 \le -\lambda/(d+2) \le  -\tilde \lambda/3<(\chi_i - \chi_j)/2$.
Since $\e < \e_0$, this yields $ \chi_j +\e <\chi_i -\e$.

Similarly, we can further show that $\G_x$ is resonance, i.e. preserves the splitting.
Using notations above, if $i<j$ we can estimate backward iterates: for $n<0$ we have 
$$
e^{(\chi_i +\e)n}\, \| t'_i \| \le\| F ^n_{gx} (t'_i) \| \le \| (F ^n_{gx}  \circ \G_{x} )(t) \| = \| \G_{f^n x} ( F ^n_x (t)) \| \le 
\| \G_{f^n x} \| \cdot e^{(\chi_j-\e)n} \| t \|,
$$
which is impossible since $ \chi_i+\e <\chi_j -\e$. This follows as above from \eqref{epsilon} and 
\eqref{mu} since for a strict sub-resonance $\chi_i < \chi_j$ we have $\chi_i - \chi_j \le \mu<0$ 
and hence $\e<\e_0 \le -\mu/(d+1) \le  -\mu/2<(-\chi_i + \chi_j)/2$.

\vskip.2cm
Now we consider a new family of coordinate changes
$$\tilde \h_x = \G_x^{-1}\circ \h_{gx} \circ \g_x$$ 
which also satisfies $\tilde \h_x(0)=0$ and $D_0\tilde \h_x=\Id$. 
A direct calculation shows that
$$
\begin{aligned}
\tilde \h_{fx} \circ \f_x \circ \tilde \h_x^{-1} & \,=\,  
\G_{fx}^{-1}\circ \h_{fgx} \circ \g_{fx} \circ \f_x \circ  \g_x^{-1}\circ \h_{gx}^{-1} \circ \G_x = \\
&\, = \,\G_{fx}^{-1}\circ \h_{fgx} \circ  \f_{gx} \circ \h_{gx}^{-1} \circ \G_x \,= \,
\G_{fx}^{-1}\circ \p_{gx} \circ \G_x \,= \,
\tilde \p_x.
\end{aligned}
$$
Hence if $ \p_x $ is a sub-resonance polynomial then so is $\tilde \p_x $ as a composition 
of sub-resonance  polynomials. Now part (2) of the theorem gives $\tilde \h_x = G_x \h_x $ for some 
$G_x \in \s_x$ which depends continuously on $x$. Then the definition of $\tilde \h_x$ yields
$$  
\h_{gx} \circ \g_x = \G_x \circ \tilde \h_x = (\G_x G_x) \circ \h_x 
$$
so that $\h_{gx} \circ \g_x \circ \h_x^{-1}=\G_x G_x $, which is again
a sub-resonance polynomial, as claimed.

Similarly, using part (2'), we can obtain that if $ \p_x $,  and hence $\tilde \p_x $, are resonance 
polynomials then  $\h_{gx} \circ \g_x \circ \h_x^{-1}=\G_x G_x $,  where $G_x$, and hence $\G_x  G_x$, 
 are also resonance polynomials for each $x\in X$. 
\vskip.3cm
This completes the proof of Theorem \ref{NFext}.
$\QED$

\subsection {Proof of Corollary \ref{Cinf}}
By part (2) of Theorem \ref{NFext}, if we fix a choice of Taylor polynomials of degree $d$ for $\h_x$, 
then the family $\h_x$ is unique. Then for each $N>d$ we can do the construction in part (1) with 
this fixed choice of Taylor polynomials and obtain the family of $C^{N}$ diffeomorphisms $\h_x$. 
By uniqueness, all these families coincide and hence $\h_x$ are $\Ci$ diffeomorphisms. 
For $\tilde \h$ in part (1') the smoothness follows since, as we will show, it is a composition of $\h$ with a 
polynomial diffeomorphism.

\section{Proof of Theorem \ref {NFfol} and Corollary \ref{cor1/2pinch}}

The parts (1), (1'), (2), (2'), (3), and (5)  of Theorem \ref {NFfol} are obtained obtained 
using Theorem \ref{NFext} as follows. We consider the vector bundle $\E=T\w$ with $\E_x=T_x\w$. 
To construct extension $\f$ as in Theorem \ref{NFext} we restrict $f$ to the leaves of $\w$ and obtain
 $\f_x$ by identifying $B(x,\sigma) \subset T_x \w$ with a neighborhood of $x$ in $\w_x$ using exponential map. It is easy to see that Assumptions \ref{ass} are satisfied with $N=\lfloor r \rfloor$ and 
$\a=r-\lfloor r \rfloor$.

Hence Theorem \ref{NFext} gives existence of families $\{ \h_x\}_{x\in \M}$ and $\{ \h_x'\}_{x\in \M}$
of local normal form coordinates as in (1) and (1') satisfying ``uniqueness properties" (2) and (2').
Then, as in Remark \ref{NFglob}, they can be extended uniquely to global diffeomorphisms $\h_x :  \w_x \to \E_{x}$. 
We note that the H\"older condition at $0$ in Theorem \ref{NFext} implies that $\h_x$ in (1) is
globally H\"older along $\w_x$ by part (4). This also implies that $\h_x'$ in (1') is globally H\"older
since by (2) it differs from $\h_x$ by a polynomial diffeomorphism.

To prove (3), we similarly restrict $g$ to the leaves of $\w$ and obtain the extension $\g$ 
commuting with $\f$, so that the result follows from (3) of Theorem \ref{NFext}.

The existence of $\{ \h_x\}_{x\in \M}$ as in part (5) can be obtained by constructing the Taylor terms
of $\h_x$ that depend smoothly on $x$ as indicated in Remark \ref{smooth dep}, see \cite{KS15} 
for more details of this argument.

Part (4) requires a different argument for which we refer to \cite{KS15,KS16}.
First one shows inductively that the Taylor polynomial of the 
transition maps is sub-resonance, and then argues that error term is zero.

The first part of Corollary \ref{cor1/2pinch} follows directly since the splitting \eqref{splitting} is trivial, 
$d=1$, and hence there are no non-linear sub-resonance polynomials. This also means that 
$\tilde \lambda$ and $\mu$ are not deeded as there are no corresponding relations and we get
$\e_0=\lambda/3=-\chi/3$. Hence we need $\e< \min \{\e_0/3, \nu/( N+\a+1)\}$.
The second term is $-\chi (N+\alpha-1)/(N+\a+1)$ and smallest for $N=1$ and gives $-\chi \a /( 2+\a)$,
which  is less than $\e_0/3$.  This means that we need $\e<  -\chi \a /( 2+\a)$, which yields the interval
$ (-\chi (1+\frac{\a}{2+\a}), -\chi (1-\frac{\a}{2+\a}))$ with endpoint ratio $1+\a$.


\end{document}